\newtheorem{theorem}{Theorem}
\newtheorem{lemma}{Lemma}
\begin{document}

 \author{G. Bauer}
 \email{gernot.bauer@fh-muenster.de}
\address{Fachhochschule Münster, Soester Str. 13, 48155
M\"unster, Germany.}
 
\author{D.-A.\ Deckert}
\email{deckert@math.lmu.de}
\address{Mathematisches Institut der LMU, Theresienstr. 39, 80333
M\"unchen, Germany.}

\author{D.\ D\"urr}
\email{duerr@math.lmu.de}
\address{Mathematisches Institut der LMU, Theresienstr. 39, 80333
M\"unchen, Germany.}

\author{G.\ Hinrichs}
\email{hinrichs@math.lmu.de}
\address{Mathematisches Institut der LMU, Theresienstr. 39, 80333
M\"unchen, Germany.}

\title[Global solutions to the FST equations on a straight line]{Global solutions to the\\ electrodynamic two-body problem\\ on a
straight line}

\begin{abstract}
    The classical electrodynamic two-body problem has been a long standing open
    problem in mathematics. For motion constrained to the straight line, the
    interaction is similar to that of the two-body problem of classical
    gravitation. The additional complication is the presence of unbounded
    state-dependent delays in the Coulomb forces due to the finiteness of the
    speed of light.  This circumstance renders the notion of local solutions
    meaningless, and therefore, straight-forward ODE techniques can not be
    applied. Here, we study the time-symmetric case, i.e., the
    Fokker-Schwarzschild-Tetrode (FST) equations, comprising both advanced and
    retarded delays.  We extend the technique developed in \cite{DirkGuenter},
    where existence of FST solutions was proven on the half-line, to ensure
    global existence -- a result that had been obtained by Bauer \cite{Bauer} in
    1997.  Due to the novel technique, the presented proof is shorter and more
    transparent but also relies on the idea to employ asymptotic data to
    characterize solutions.\\

    \textbf{Keywords:} Fokker-Schwarzschild-Tetrode electrodynamics;
    Wheeler-Feynman electrodynamics; delay differential equations

\end{abstract}

\maketitle

\section{Introduction}

The time-symmetric electrodynamic interaction of point-charges is described by
the so-called Fokker-Schwarzschild-Tetrode (FST) equations. Historically, these
equations were first discussed in the works \cite{Schwarzschild, Tetrode,
Fokker}. Later Wheeler and Feynman took up these equations in their seminal
works \cite{WF1,WF2} to show that the electrodynamic arrow of time is derived
from the thermodynamic one; see also \cite{GernotDirkDetlefGuenter}. In our
case, in which we restrict ourselves to two point-charges moving along a
straight line having positions $a(t),b(t)\in\mathbb R$ at time
$t\in\mathbb R$, the FST equations take the form
\begin{equation}\label{WF}
 \begin{split} 
     \frac{\text d}{\text dt}\left(\frac{\dot a(t)}{\sqrt{1-\dot
     a(t)^2}}\right) &= \frac{\kappa_a}2\left[\frac{1+\dot
     b\left(t_2^-\right)}{1-\dot
     b\left(t_2^-\right)}\frac1{\left(a(t)-b(t_2^-)\right)^2} + \frac{1-\dot
     b\left(t_2^+\right)}{1+\dot
     b\left(t_2^+\right)}\frac1{\left(a(t)-b(t_2^+)\right)^2}\right]\,, \\
     \frac{\text d}{\text dt}\left(\frac{\dot b(t)}{\sqrt{1-\dot
     b(t)^2}}\right) &= -\frac{\kappa_b}2\left[\frac{1-\dot
     a\left(t_1^-\right)}{1+\dot
     a\left(t_1^-\right)}\frac1{\left(b(t)-a(t_1^-)\right)^2} +\frac{1+\dot
     a\left(t_1^+\right)}{1-\dot
     a\left(t_1^+\right)}\frac1{\left(b(t)-a(t_1^+)\right)^2}\right]\,.
 \end{split}
\end{equation}
Here, we use the dot notation, i.e., any derivative w.r.t.\ time parameter $t$
is denoted by an overset dot such as $\dot a(t)=\frac{d}{dt}a(t)$ and $\ddot a(t) =
\frac{d^2}{dt^2} a(t)$, and furthermore,
units such that speed of light equals one. Furthermore,
$\kappa_{a},\kappa_{b}>0$ denote coupling constants and the so-called advanced and retarded times $t^+_i$ and $t^-_i$ for $i=1,2$
are given implicitly as solutions to the following equations
\begin{align}\label{t+-}
    \begin{split}
        t^\pm_1&=t_1^\pm(a,b(t),t)=t\pm|a(t_1^\pm(a,b(t),t))-b(t)|\,,\\
        t^\pm_2&=t_2^\pm(a(t),b,t)=t\pm|a(t)-b(t_2^\pm(a(t),b,t))|\,.
    \end{split}
\end{align}
Thus, $t^\pm_1$ and $t_2^\pm$ are functionals of trajectory $a$ and $b$,
respectively. To keep the notation slim we will often omit their arguments.
Geometrically, equations \eqref{t+-} can be understood as the intersection
times of the forward and backward light-cones of the respectively other
trajectory. These intersection points exist as long as the
trajectories $a$ and $b$ have velocities that are bounded away from one, i.e., the speed of light.

We shall establish in this paper the existence of solutions to the FST equations
\eqref{WF} satisfying a prescribed asymptotic behavior; see
Theorem~\ref{mainresult} below.

The presence of advanced and delayed terms is the main source of
difficulty when aiming at a global existence result. Namely, it implies that the
right-hand sides of the equations of motion in \eqref{WF} involve not only terms
evaluated at the same time instant $t$ but also at the respectively future or
past times $t^\pm_i$. Since these times are potentially unbounded
functionals of the entire trajectories, the notion of local solutions is
meaningless as even a very small interval of the trajectory $a$ may depend on a
very large interval of the trajectory $b$ and vice versa. Therefore,
straight-forward ODE techniques based on finding local solutions first and
extending them to global solutions with an additional a priori estimate can not
be applied.  A particularly interesting question is therefore in which sense one
may hope for a well-posed initial value problem. Due to the delayed terms it is
not clear if in general Cauchy data, i.e., position and velocities of both
charges at one time instant, suffices to characterize solutions uniquely  or if
even whole strips of the trajectories have to be prescribed as initial data. 
In case the advanced terms are omitted, the respective equations are called
Synge equations. For this case, solutions on the half-line can be found by
integration,
but obtaining global
ones is still highly non-trivial; see
\cite{Driver1,Angelov,DirkGuenter}. The goal of this work, i.e.,
Theorem~\ref{mainresult} below, is to ensure global existence of solutions to
equations \eqref{WF} including both the advanced and retarded terms.  Before
Theorem~\ref{mainresult} can be spelled out precisely and its special
formulation can be understand we need to recall some previous results
and discuss the asymptotic behavior of potential solutions. 

To date the only result about uniqueness of solutions to the FST equations is
given in \cite{Driver2}. There, uniqueness of solutions for two charges on the
straight line was obtained in the special situation of initially prescribed zero
velocities and sufficiently large separation of the two charges; for a
general discussion of valid initial data and uniqueness in the case of a FST toy
model see \cite{DirkNicola}.  General global existence of solutions on the
straight line was later shown in \cite{Bauer}. As yet the only result towards a
solution theory of the FST equations in three space-dimensions is given in
\cite{Dirk2} where, for $N$ rigid charges and a prescribed asymptotic behavior
of the trajectories for times $|t|>\tau$, existence of solutions on $[-\tau,\tau]$ for
arbitrary large $0<\tau<\infty$ was shown.  Apart from the different setting, the
crucial difference in the latter two works lies in the prescription of the
initial data.  In \cite{Bauer} it was given as asymptotic data in the remote
past and in \cite{Dirk2} as Cauchy data. Both choices seem to have advantages
and disadvantages: In order to use asymptotic data, a priori, one must determine
the asymptotic behavior of potential scattering solutions; until now this was
only done successfully on the straight line; see \cite{Bauer}. This knowledge
then provides sufficient global control on the charge trajectories to employ
topological fixed-point methods. However, in general, such methods do not provide
information about uniqueness and about how the asymptotic data relates to
potential data at finite times.  On the contrary, possible notions of initial
data at finite times are suggested readily when recasting the FST equations into
integral form to yield potential candidates for self-maps. The trouble with this
approach is that one usually lacks sufficient control on the global behavior of
the solutions in order to apply fixed-point methods globally. Because of this one is
usually only able to prove existence of solutions on finite time intervals as in
\cite{Dirk2}.

In a recent work \cite{DirkGuenter}, for two charges on the straight line, it
was possible to infer from prescribed initial data existence of solutions not
only in a finite time interval but on the half-line.  More precisely, it
was shown in  \cite[Theorem 1.1]{DirkGuenter}:
\begin{theorem}[FST solutions on the half-line]
    \label{conditional}
    Given an initial position $a_0\in\mathbb R$ and velocity $\dot a_0\in]-1,1[$
    of charge $a$ at a time $T\in\mathbb R$, and in addition, an initial
    trajectory strip $b_0\in C^\infty\left([T^-,T^+],]-\infty,a_0[\right)$
    of charge $b$ such that $T^\pm=T\pm\left(a_0-b_0(T^\pm)\right)$, there
is a trajectory pair $(a,b)$ fulfilling:
\begin{enumerate}[(i)]
    \item Trajectory $a$ fulfills the equation of motion \eqref{WF} for all
        $t\ge T$ and $b$ for all $t\ge T^+$;
    \item The pair $(a,b)$ satisfies the initial conditions
        $a(T)=a_0,\dot a(T)=\dot a_0,\left.b\right|_{[T^-,T^+]}=b_0$;
    \item The pair $(a,b)$ 
        fulfills $a\in C^\infty([T,\infty[)$ and $b\in C^\infty([T^+,\infty[)$.
\end{enumerate}
\end{theorem}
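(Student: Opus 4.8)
\medskip
\noindent\textbf{Proof idea.}
The plan is to recast the initial value problem as a topological fixed-point problem and to feed it with the a priori control that comes from the one-dimensional \emph{repulsive} structure of \eqref{WF}. Since the Coulomb force on the line is repulsive, the two charges cannot pass each other, so $b(t)<a(t)$ throughout; consequently the light cones always intersect (the advanced/retarded times of \eqref{t+-} satisfy $t_i^-<t<t_i^+$), and, because the right-hand sides of \eqref{WF} carry a fixed sign, the momenta $p_a=\dot a/\sqrt{1-\dot a^2}$ and $p_b=\dot b/\sqrt{1-\dot b^2}$ are monotone — $p_a$ non-decreasing, $p_b$ non-increasing. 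Together with the $1/(a-b)^2$ decay of the forces this monotonicity is what will prevent a collision, keep the velocities bounded away from $\pm1$, and make the accelerations integrable in time.

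Concretely, I would introduce an admissible set $\mathcal A$ of trajectory pairs $(a,b)$, with $a\in C^1([T,\infty))$ and $b\in C^1([T^-,\infty))$, that obey the prescribed data $a(T)=a_0,\ \dot a(T)=\dot a_0,\ b|_{[T^-,T^+]}=b_0$, have velocities confined to a fixed compact subinterval of $]-1,1[$, keep $a-b$ bounded below by a fixed positive constant, and have uniformly bounded accelerations; $\mathcal A$ is a closed, convex, bounded subset of the Fr\'echet space of such pairs with the topology of locally uniform $C^1$-convergence. On $\mathcal A$ I would define a map $\mathcal T$: given $(a,b)\in\mathcal A$, the functionals $t_i^\pm$ of \eqref{t+-} built from $a$ and $b$ are well defined and unique — e.g.\ $t_2^+$ solves $\tau+b(\tau)=t+a(t)$, and $\tau\mapsto \tau+b(\tau)$ is strictly increasing (slope $1+\dot b\in\,]0,2[$) and unbounded, so the intermediate value theorem applies; similarly for the other three — and substituting these functionals of the \emph{given} $b$ (resp.\ $a$) into the right-hand sides of \eqref{WF} turns them into known continuous source terms, which after integrating twice with the initial data yield a new pair $\mathcal T(a,b)=(\tilde a,\tilde b)$. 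A fixed point of $\mathcal T$ is exactly a pair satisfying \eqref{WF} with the required initial conditions.

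The heart of the argument is then (i) that $\mathcal T$ maps $\mathcal A$ into itself and (ii) that $\mathcal T$ is continuous with precompact image. For (i): the sign of the right-hand sides gives $\dot p_{\tilde a}\ge 0$ and $\dot p_{\tilde b}\le 0$, so once the charges move apart they keep moving apart; combined with the lower bound on $a-b$ this makes $\int_T^\infty|\ddot{\tilde a}|\,dt$ and $\int_{T^+}^\infty|\ddot{\tilde b}|\,dt$ finite, which keeps $p_{\tilde a},p_{\tilde b}$ — hence $\dot{\tilde a},\dot{\tilde b}$ — uniformly bounded away from $\pm1$ and propagates the lower bound on the separation; choosing the constants defining $\mathcal A$ large/small enough closes this bootstrap. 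For (ii): the integrands depend continuously on $(a,b)$ in the chosen topology — here one uses the implicit-function-theorem continuity of the $t_i^\pm$ together with the uniform bounds — and the uniform acceleration bound gives equicontinuity, so Arzel\`a--Ascoli yields precompactness on every compact time interval; a Schauder-type fixed-point theorem for Fr\'echet spaces then produces a fixed point $(a,b)$. A standard regularity bootstrap, using \eqref{WF} and the smoothness of $b_0$, finally upgrades it to $a\in C^\infty([T,\infty))$ and $b\in C^\infty([T^+,\infty))$, which together with the construction gives (i)--(iii).

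I expect the main obstacle to be the self-consistency of the a priori estimates in the presence of the \emph{advanced} terms. Unlike the Synge (retarded-only) case one cannot simply integrate forward step by step, because the force on $a$ at time $t$ involves $b$ at the future time $t_2^+$, which lies arbitrarily far ahead when the charges are far apart; the ``fly-apart'' control and the confinement of the velocities therefore have to be established \emph{simultaneously on the whole half-line} rather than propagated incrementally, and one must check that the $t_i^\pm$ stay finite and vary continuously along the way. A secondary technical point is verifying that $\mathcal T$ respects the strip condition $T^\pm=T\pm(a_0-b_0(T^\pm))$, i.e.\ that $t_2^+$ evaluated at $t=T$ equals $T^+$ (and $t_2^-$ at $t=T$ equals $T^-$), so that the source term for $a$ is already defined at $t=T$ and matches the given strip $b_0$.
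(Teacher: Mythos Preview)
This theorem is not proved in the present paper; it is quoted verbatim as \cite[Theorem~1.1]{DirkGuenter} and used here only as a black box to manufacture the conditional solutions $(a_T,b_T)$. So there is no proof in this paper to compare your proposal against.

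That said, your outline is consistent with what one can reconstruct of the argument in \cite{DirkGuenter} from the way it is cited here: the present paper invokes from \cite{DirkGuenter} an explicit uniform velocity bound (their Proposition~2.1, reproduced in the proof of Lemma~\ref{lem:4}), a uniform separation bound (their equation~(30)), the light-cone estimates of Lemma~\ref{lem:t+-}, and the Arzel\`a--Ascoli variant of Lemma~\ref{lem:Arzela}. These are precisely the a~priori ingredients you list, and a Schauder-type fixed point on a convex set of trajectory pairs is the natural way to assemble them; your identification of the sign structure ($\dot p_a\ge 0$, $\dot p_b\le 0$) and of the implicit-function control of $t_i^\pm$ is also on target.

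The one place where your sketch is thin is exactly the point you flag yourself: the self-mapping $\mathcal T(\mathcal A)\subset\mathcal A$. The sentence ``choosing the constants defining $\mathcal A$ large/small enough closes this bootstrap'' hides the whole difficulty. Monotonicity of the momenta and the $1/(a-b)^2$ decay do not by themselves give a \emph{quantitative} bound on $\sup_{t\ge T}|\dot{\tilde a}(t)|$ and $\inf_{t\ge T}(\tilde a(t)-\tilde b(t))$ in terms of the constants cutting out $\mathcal A$; one needs an explicit inequality of the type quoted in the proof of Lemma~\ref{lem:4} (which bounds the output velocity solely through $\dot a_0$, $\|\dot b_0\|_\infty$ and $a_0-b_0(T)$), and deriving such an inequality in the presence of the advanced term is the substantive content of \cite{DirkGuenter}. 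Your proposal would be complete once that estimate is supplied.
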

Henceforth, we will refer to such solutions as ``conditional solutions''
corresponding to the prescribed initial data. This result is basic to
this work: In
order to prove the global existence result for the FST equations, Theorem~\ref{mainresult} below, we
combine Theorem~\ref{conditional} with the a priori results on the asymptotic
behavior of solutions that were also exploited in \cite{Bauer}.  The strategy of
proof consists of two steps: First, we identify the asymptotic trajectories
$(x,y)$ of potential global solutions. As it turns out, the velocities $(\dot
a,\dot b)$ of any
global solution $(a,b)$ are bounded away from one 
\cite[(24a) in Proposition 2.1]{DirkGuenter}, and since
$\ddot a>0$ and $\ddot b<0$ due to \eqref{WF}, they converge to limiting values
\begin{align}
    \label{asymp-vels}
    -1<u_{-\infty}=\lim_{t\to-\infty}\dot a(t)\quad<\quad\lim_{t\to-\infty}\dot
    b(t)=v_{-\infty}<1\,.
\end{align}
Moreover, as we discuss in
Section~\ref{asymptotes}, it
is even possible to show that these two asymptotic velocities together with two
other reals $x_{-\infty},y_{-\infty}$, referred to as asymptotic positions,
parametrize all possible asymptotes $(x,y)$ of potential global solutions. In a
second step, relying on this asymptotic information, we will then extract
initial data to infer conditional solutions conditional solutions $a_T,b_T$
satisfying
\begin{equation}\label{Anfangsbed}
a_T(T)=x(T)\,,\, \left.b_T\right|_{[T^-,T^+]} = \left.y\right|_{[T^-,T^+]}\,,
\end{equation}
for sufficiently large negative number $T$ (meaning large magnitude $|T|$ but
$T<0$), where
\begin{equation}\label{T+-}
 T^\pm=t_2^\pm(x(T),y,T)\,.
\end{equation} 
For convenience, $a_T$ and $b_T$ will be extended to all times $t\in\mathbb R$
by means of the asymptotes $(x,y)$
\begin{align}
    \label{ext}
    a_T(t)=x(t) \text{ for } t\in ]-\infty,T[
    \qquad
    b_T(t)=y(t) \text{ for } t\in ]-\infty,T^-[\,.
\end{align}
  Since
their velocities are bounded away from the speed of light uniformly in $T$ we
are able to establish sufficiently strong uniform estimates that allow to prove
that a certain subsequence of the family of trajectory pairs $(a_T,b_T)$
converges in a suitable sense to a global solution to \eqref{WF} as
$T\to\infty$.\\

The paper is structured as follows: In Section~\ref{asymptotes} we 
discuss the asymptotic behavior of potential global solutions. In
Section~\ref{central} we use the introduced notions to characterize the
asymptotes in order to formulate our global existence result in
Theorem~\ref{mainresult} and provide the proof.

\section{Asymptotic behavior of potential global solutions}
\label{asymptotes}

As the highest derivative in \eqref{WF} is on the right-hand side, one
immediately observes that any global solution $(a,b)$ to \eqref{WF} must be
smooth. As discussed earlier \eqref{asymp-vels}, the velocities $\dot a$ and
$\dot b$ of any potential global solution have a modulus which is bounded away
from one, which implies that the implicitly defined functions $t_i^\pm$ in
\eqref{t+-} are well-defined and smooth.  Moreover, because of $\ddot a>0$ and
$\ddot b<0$, the velocities  converge monotonically to asymptotic velocities
\eqref{asymp-vels}.  Without restricting generality, we will assume
\begin{align*}
   a(t)>b(t)
   \qquad
   \text{for }t\in\mathbb R
\end{align*} 
throughout the paper.  In other words, asymptotically, the
velocities are constant, and to a first approximation, the difference
$a(t)-b(t_2^\pm(t))$ can be expected to be of order $t$ for large $|t|$.  The
other terms on the right-hand side in \eqref{WF} involve only velocities, so
that the acceleration
\begin{equation}\label{Beschleunigung}
\ddot a(t)=(1-\dot a(t)^2)^{\frac32}\frac{\text d}{\text dt}\left(\frac{\dot
a(t)}{\sqrt{1-\dot a(t)^2}}\right)
\end{equation}
should be of order $\frac1{t^2}$ for large $|t|$, which leads to the
guess
\begin{equation}\label{Ansatz}
a(t)=x_{-\infty}+u_{-\infty}t-C\ln(-t)+o\left(\ln(-t)\right)
\end{equation}
for $t\to-\infty$, where we use the notation $f(s)=o(g(s))\Leftrightarrow
\lim_{s}\frac{f(s)}{g(s)}=0$ and $f(s)\sim g(s)\Leftrightarrow
\lim_{s}\frac{f(s)}{g(s)}=1$. The logarithmic corrections in
\eqref{Ansatz} are characteristic to Coulomb
interaction in 3+1 dimensions and well-known in the case without delays (note
that although we restrict the dynamics to a straight line, we use the
electrodynamic interaction, which is determined by the Green's function of the
d'Alembert operator, in 3+1 dimensions). 

The result \eqref{Ansatz} was obtained in \cite{Bauer} for the first time,
but since that publication is only available in German and it is crucial for
understanding the formulation of our main result, Theorem~\ref{mainresult}, we
shall briefly discuss the intuition behind its proof here. This discussion will
consist of a rigorous part until equation \eqref{Ansatz2} which will provide
formulas that will be used in the proof of our main result and, in particular,
make the reader familiar with the terms \eqref{A1}-\eqref{A5} below. The shorter
second part will then explain in a nonrigorous way how the estimate \eqref{Ansatz2}
leads to \eqref{Ansatz} including an identification of the constant
$C$. However, a proof of the latter will not be given in this section. Rather,
it will proven as part of the main result Theorem~\ref{mainresult} that the
established solutions comply with the asymptotic behavior given here.

In order to infer more details about the asymptotic behavior, it is convenient
to bring the integrated version of the equation of
motion \eqref{WF}, i.e.,
\begin{equation}\label{WFintTt}
 \begin{split}
 \dot a(t)
 =&\dot a(T)+\frac{\kappa_a}{2}\int_T^t(1-\dot a(s)^2)^{\frac32} \\
 & \hskip2cm \times \left[\frac{1+\dot b\left(t_2^-\right)}{1-\dot b\left(t_2^-\right)}\frac1{\left(a(s)-b(t_2^-)\right)^2} 
 + \frac{1-\dot b\left(t_2^+\right)}{1+\dot b\left(t_2^+\right)}\frac1{\left(a(s)-b(t_2^+)\right)^2}\right]\text ds \,,
 \end{split}
\end{equation}
for $t,T\in\mathbb R$,
into a form similar to \eqref{Ansatz} by partial integration such that the
factors \\$\left(a(s)-b(t_2^\pm(t))\right)^{-2}$ turn into 
$\ln\left(a(s)-b(t_2^\pm(t))\right)$. By definition of $t^\pm_i$
in \eqref{t+-}, we have
\begin{equation}\label{tpunkt}
 \dot t_2^\pm(s)=1\pm\dot a(s)\mp\dot b(t_2^\pm(s))\dot t_2^\pm(s)=\frac{1\pm
 a(s)}{1\pm\dot b(t_2^\pm(s))}\,,
\end{equation}
and therefore,
\begin{equation}\label{1-tpunkt}
 \frac{\text d}{\text ds}\left(a(s)-b(t_2^\pm(s))\right)=\pm\frac{\text d}{\text
 ds}\left(t_2^\pm(s)-s\right)=\pm(\dot t_2^\pm(s)-1)=\frac{\dot a(s)-\dot
 b(t_2^\pm(s)}{1\pm\dot b(t_2^\pm(s))}\,.
\end{equation}
We recall from \cite[Lemma 2.1]{DirkGuenter} that the modulus in definition
\eqref{t+-} can be omitted since, as
the speed of light equals one, one has
\begin{align}
    \label{eq:t+-est}
    \frac{a(t)-b(t)}{2}
    \leq
    \left.\begin{cases}
    a(t)-b(t^\pm_2)\\ 
    a(t^\pm_1)-b(t)
    \end{cases}\right\rbrace
    \leq
    \frac{a(t)-b(t)}{1-\max\{\|\dot a\|_\infty,\|\dot b\|_\infty\}}\,,
\end{align}
with $\|\cdot\|_\infty$ denoting the $L^\infty$ norm.
Consequently, in the first integration by parts, we take the
antiderivative of
\begin{align*}
    \frac{d}{ds}\frac1{a(s)-b(t_2^\pm(s))}=-\frac{\dot a(s)-\dot b(t_2^\pm(s)}{1\pm\dot
    b(t_2^\pm(s))}\frac1{\left(a(s)-b(t_2^\pm(s))\right)^2}
\end{align*}
and the derivative of
the rest. This yields
\allowdisplaybreaks
\begin{subequations}
\begin{align}
    \dot a(t) =& \dot a(T) \\
  &+ \frac{\kappa_a}2\left(1-\dot a(T)^2\right)^{\frac32}\left[\frac{1+\dot b(t_2^-(T))}{\dot a(T)-\dot b(t_2^-(T))}\frac1{a(T)-b(t_2^-(T))}\right. \\
  &\hphantom{\frac{\kappa_a}2\left(1-\dot a(T)^2\right)^{\frac32}\left[\right.}+\left.\frac{1-\dot b(t_2^+(T))}{\dot a(T)-\dot b(t_2^+(T))}\frac1{a(T)-b(t_2^+(T))}\right] \\
  &- \frac{\kappa_a}2\left(1-\dot a(t)^2\right)^{\frac32}\left[\frac{1+\dot
  b(t_2^-(t))}{\dot a(t)-\dot b(t_2^-(t))}\frac1{a(t)-b(t_2^-(t))}\right. 
  \label{stepone}\\
  &\hphantom{\frac{\kappa_a}2\left(1-\dot a(t)^2\right)^{\frac32}\left[\right.} + \left.\frac{1-\dot b(t_2^+(t))}{\dot a(t)-\dot b(t_2^+(t))}\frac1{a(t)-b(t_2^+(t))}\right] \\
  &+ \frac{\kappa_a}2\int_T^t \left[\frac{\text d}{\text ds}\left(\left(1-\dot a(s)^2\right)^{\frac32}\frac{1+\dot b(t_2^-(s))}{\dot a(s)-\dot b(t_2^-(s))}\right)\right]\frac1{a(s)-b(t_2^-(s))}\text ds \\
  &+ \frac{\kappa_a}2\int_T^t \left[\frac{\text d}{\text ds}\left(\left(1-\dot a(s)^2\right)^{\frac32}\frac{1-\dot b(t_2^+(s))}{\dot a(s)-\dot b(t_2^+(s))}\right)\right]\frac1{a(s)-b(t_2^+(s))}\text ds\,.
\end{align}
\end{subequations}
In the second step, we integrate the latter equation for $\dot a(t)$ again from
$T$ to $t$ in order to obtain an equation for $a(t)$. The corresponding integral
over \eqref{stepone} is again done by parts using
$\frac{d}{ds}\ln(a(s)-b(t_2^\pm(s)))=\frac{\dot a(s)-\dot
b(t_2^\pm(s))}{1\pm\dot b(t_2^\pm(s))}\frac1{a(s)-b(t_2^\pm(s))}$ as antiderivate. We find
\allowdisplaybreaks
\begin{subequations}\label{WFint}
\begin{align}
 a(t)=& a(T)+\dot a(T)(t-T) \label{WFinta}\\
 &+ \frac{\kappa_a}2\left(1-\dot a(T)^2\right)^{\frac32}\left[\frac{1+\dot b(t_2^-(T))}{\dot a(T)-\dot b(t_2^-(T))}\frac{T}{a(T)-b(t_2^-(T))}\right. \nonumber\\
 &\hphantom{\frac{\kappa_a}2\left(1-\dot a(T)^2\right)^{\frac32}\left[\right.}  + \left.\frac{1-\dot b(t_2^+(T))}{\dot a(T)-\dot b(t_2^+(T))}\frac{T}{a(T)-b(t_2^+(T))}\right] \frac{t-T}T \label{WFintb}\\
 &- \frac{\kappa_a}2\left(1-\dot a(t)^2\right)^{\frac32}\left[\frac{1-\dot b(t_2^-(t))^2}{\left(\dot a(t)-\dot b(t_2^-(t))\right)^2}\ln\left(a(t)-b(t_2^-(t))\right)\right. \nonumber\\
 &\hphantom{\frac{\kappa_a}2\left(1-\dot a(t)^2\right)^{\frac32}\left[\right.} + \left. \frac{1-\dot b(t_2^+(t))^2}{\left(\dot a(t)-\dot b(t_2^+(t))\right)^2}\ln\left(a(t)-b(t_2^+(t))\right)\right] \label{WFintc}\\
 &+ \frac{\kappa_a}2\left(1-\dot a(T)^2\right)^{\frac32}\left[\frac{1-\dot b(t_2^-(T))^2}{\left(\dot a(T)-\dot b(t_2^-(T))\right)^2}\ln\left(a(T)-b(t_2^-(T))\right)\right. \nonumber\\
 &\hphantom{\frac{\kappa_a}2\left(1-\dot a(T)^2\right)^{\frac32}\left[\right.} + \left. \frac{1-\dot b(t_2^+(T))^2}{\left(\dot a(T)-\dot b(t_2^+(T))\right)^2}\ln\left(a(T)-b(t_2^+(T))\right)\right] \label{WFintd}\\
 &+ \frac{\kappa_a}2\int_T^t \left[\frac{\text d}{\text ds}\left(\left(1-\dot a(s)^2\right)^{\frac32}\frac{1-\dot b(t_2^-(s))^2}{\left(\dot a(s)-\dot b(t_2^-(s))\right)^2}\right)\right]\ln\left(a(s)-b(t_2^-(s))\right) \nonumber\\
  &\hphantom{\frac{\kappa_a}2\int_T^t} + \left[\frac{\text d}{\text ds}\left(\left(1-\dot a(s)^2\right)^{\frac32}\frac{1-\dot b(t_2^+(s))^2}{\left(\dot a(s)-\dot b(t_2^+(s))\right)}\right)\right]\ln\left(a(s)-b(t_2^+(s))\right)\text ds \label{WFinte}\\
 &+ \frac{\kappa_a}2\int_T^t \int_T^s \left[\frac{\text d}{\text dr}\left(\left(1-\dot a(r)^2\right)^{\frac32}\frac{1+\dot b(t_2^-(r))}{\dot a(r)-\dot b(t_2^-(r))}\right)\right]\frac1{a(r)-b(t_2^-(r))} \nonumber\\
  &\hphantom{\frac{\kappa_a}2\int_T^t\int_T^s} + \left[\frac{\text d}{\text dr}\left(\left(1-\dot a(r)^2\right)^{\frac32}\frac{1-\dot b(t_2^+(r))}{\dot a(r)-\dot b(t_2^+(r))}\right)\right]\frac1{a(r)-b(t_2^+(r))} \text dr\text ds\,.\label{WFintf}
\end{align}
\end{subequations}
Let us turn to the asymptotic behavior where we are interested in negative $t$
and have the arbitrary parameter $T$ at our expense. Thanks to the convergence
of the velocities in \eqref{asymp-vels},
identity \eqref{1-tpunkt}, and the de l'Hospital rule, we observe
$a(t)-b(t_2^\pm(t))\sim\frac{u_{-\infty}-v_{-\infty}}{1-v_{-\infty}}t$ for
$t\to-\infty$
so that
\begin{equation}\label{eta1}
 \eta_1:=
 \lim_{t\to-\infty}
 \kappa_a(1-\dot a(t)^2)^{\frac32}\left[\frac{1-\dot b(t^\pm_2)^2}{(\dot a(t)-\dot b(t^\pm_2(t)))^2}\right]
 =\kappa_a(1-u_{-\infty}^2)^{\frac32}\left[\frac{1-v_{-\infty}^2}{(u_{-\infty}-v_{-\infty})^2}\right]\,,
\end{equation}
and, $(\ref{WFintb})\sim \eta_1 \frac{t-T}{T}$ for $T\to-\infty$.
Furthermore, we have
$$\ln\left(a(t)-b(t_2^\pm(t))\right)\sim\ln\left(-\frac{u_{-\infty}-v_{-\infty}}{1\pm
    v_{-\infty}}\right)+\ln(-t)\,,$$
for $t\to-\infty$, so that 
\begin{equation*}
\begin{split}
(\ref{WFintc})
&\sim-\frac{\kappa_a}2(1-u_{-\infty}^2)^{\frac32}\frac{1-v_{-\infty}^2}{(u_{-\infty}-v_{-\infty})^2}\\
    & \hskip2cm\times\Big[2\ln(-t)+ \ln\left(-\frac{u_{-\infty}-v_{-\infty}}{1- v_{-\infty}}\right)
+ \ln\left(-\frac{u_{-\infty}-v_{-\infty}}{1+ v_{-\infty}}\right) \Big] \\
&= -\eta_1\ln(-t)-\frac{\eta_1}2\ln\left(-\frac{(u_{-\infty}-v_{-\infty})^2}{1-v_{-\infty}^2}\right)
\end{split}
\end{equation*}
for $t\to-\infty$ and, correspondingly,
$$(\ref{WFintd})\sim\eta_1\ln(-T)+\frac{\eta_1}2\ln\left(-\frac{(u_{-\infty}-v_{-\infty})^2}{1-v_{-\infty}^2}\right)$$
for $T\to-\infty$. When computing the derivatives in the remaining integral and
double integral terms by chain rule, each summand is proportional to $\ddot
a(t)$ or $\ddot b(t)$. Since $T$ is at our expense, we choose a scaling such
that $T\ll t\ll 0$. In this case, thanks to the equation of motion \eqref{WF}, for each
acceleration one gains again a $(a(t)-b(t^\pm_2))^{-2}$ or $(a(t^\pm_1)-b(t))^{-2}$ term, and
we find that \eqref{WFinte} is of order $\frac{\ln(-t)}{t}$ and
\eqref{WFintf} of order $\frac1t$ and, as such, subleading w.r.t.\ the
logarithmic correction. This implies
\begin{equation}\label{Ansatz2}
a(t)=a(T)+\dot
a(T)(t-T)+\eta_1\frac{t-T}{T}-\eta_1\ln(-t)+\eta_1\ln(-T)+o(\ln(-t))
\end{equation}
for $t,T\to-\infty$.

Next, in the second part, we discuss nonrigorously how \eqref{Ansatz2} leads
to \eqref{Ansatz} and how to identify the constant $C$. The proof of our main
result, Theorem~\ref{mainresult}, will provide a proof of what comes next for
the case of global solutions.
Using ansatz~\eqref{Ansatz} for $a$ and omitting the remainder terms gives
$$a(T)+\dot a(T)(t-T) = x_{-\infty}+u_{-\infty}T-C\ln|T| +
\left(u_{-\infty}-\frac{C}{T}\right)(t-T)\,.$$
Substituting this expression together with ansatz~\eqref{Ansatz} into
eq.~\eqref{Ansatz2}, again neglecting the remainders, provide the formula
$$C\left(\ln|T|-\ln|t|+\frac{t-T}{T}\right)=\eta_1\left(\ln|T|-\ln|t|+\frac{t-T}{T}\right)\,$$
which indicates $C=\eta_1$. Hence, we are led to the guess
\begin{equation*}
 a(t)=x_{-\infty}+u_{-\infty}t-\eta_1\ln|t|+o(\ln(|t|))
\end{equation*}
for $t\to-\infty$. Likewise, one obtains
\begin{equation*}
 b(t)=y_{-\infty}+v_{-\infty}t+\eta_2\ln|t|+o(\ln(|t|))
\end{equation*}
with
\begin{equation}\label{eta2}
 \eta_2=\kappa_b(1-v_{-\infty}^2)^{\frac32}\left[\frac{1-u_{-\infty}^2}{(u_{-\infty}-v_{-\infty})^2}\right]\,.
\end{equation}

\section{Main result and its proof}
\label{central}

Based on the information about the asymptotic behavior of solutions that was
provided in Section~\ref{asymptotes} we can now make our main result precise. 
As Banach space for the potential solutions to the FST equations~\eqref{WF}
we employ the space $\mathcal B$ of pairs of trajectories
$(a,b)\in\mathcal C^1(\mathbb R,\mathbb R^2)$ equipped with the norm
\begin{equation}\label{Norm}
    \|(a,b)\|:=\max\left(|a(0)|,|b(0)|,\|\dot a\|_\infty,\|\dot
    b\|_\infty\right)\,. 
\end{equation}
In this notation our main result reads:
\begin{theorem}[Global Existence]
    \label{mainresult}
    Let $$x_{-\infty},\,y_{-\infty}\in\mathbb R,\,\,-1<u_{-\infty}<v_{-\infty}<1$$ and
    \begin{equation}\label{Asymptoten}
        \begin{split}
            x(t)&=x_{-\infty}+u_{-\infty}t-\eta_1\ln|t|\,\\
            y(t)&=y_{-\infty}+v_{-\infty}t+\eta_2\ln|t| \qquad \text{for }t<-1
        \end{split}
    \end{equation}
    with $\eta_1$ and $\eta_2$ defined in eq.~\eqref{eta1}, \eqref{eta2}. Then,
    the following statements hold true:
    \begin{enumerate}[(i)]
        \item There exists a global solution $(a,b)\in C^\infty(\mathbb
            R,\mathbb R^2)$ to the FST equations~\eqref{WF} with
            $$a(t)=x(t)+O\left(\frac{\ln|t|}t\right),\,b(t)=y(t)+O\left(\frac{\ln|t|}t\right)
            \qquad\text{for }t\to-\infty.$$         \item Let furthermore $T_0<-1$ be a sufficiently large negative number such that
            $x(t)>y(t)$ and $|\dot x(t)|,|\dot y(t^\prime)|<1$ for all $t\le
            T_0$ and $t^\prime\le T_0^+$ is fulfilled, and let $(a_T,b_T)_{T\leq T_0}$ denote
            the family of conditional solutions to the FST equations~\eqref{WF}
            inferred by Theorem~\ref{conditional} satisfying the initial
            conditions \eqref{Anfangsbed}, \eqref{T+-}.  Then, there is a
            sequence $(T_n)_{n\in\mathbb N}$ with
            $\lim_{n\to\infty}T_n=-\infty$ fulfilling
            $$\lim_{n\to\infty} \| (a_{T_n},b_{T_n})-(a,b)\|=0.$$
    \end{enumerate}
\end{theorem}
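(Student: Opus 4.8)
The plan is to construct the global solution as a limit of conditional solutions produced by Theorem~\ref{conditional}, using the asymptotes $(x,y)$ from \eqref{Asymptoten} to manufacture consistent initial data. First I would fix $T_0<-1$ as in part (ii), so that for every $T\le T_0$ the strip $y|_{[T^-,T^+]}$ and the Cauchy datum $(x(T),\dot x(T))$ satisfy the hypotheses of Theorem~\ref{conditional}; this yields, for each such $T$, a conditional solution $(a_T,b_T)$ extended to all of $\mathbb R$ by \eqref{ext}. The first substantive step is to show these extended trajectories satisfy \emph{uniform-in-$T$} bounds: their velocities are bounded away from $1$ (this is where one invokes \cite[(24a) in Proposition 2.1]{DirkGuenter}, checking the constant there depends only on the asymptotic data $u_{-\infty},v_{-\infty}$ and not on $T$), and consequently, via \eqref{eq:t+-est} and the equation of motion \eqref{WF}, the accelerations $\ddot a_T,\ddot b_T$ are bounded uniformly on compacts by $O(1/t^2)$-type estimates. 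The sign conditions $\ddot a_T>0$, $\ddot b_T<0$ together with the asymptotic matching \eqref{ext} give monotone velocities converging to $u_{-\infty},v_{-\infty}$ as $t\to-\infty$, uniformly in $T$.

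Second, from these uniform bounds I would extract a convergent subsequence. On each compact interval $[-N,N]$ the families $\{a_T\}$, $\{\dot a_T\}$ (and likewise for $b$) are bounded and equicontinuous — equicontinuity of the velocities coming from the uniform acceleration bound — so Arzelà--Ascoli plus a diagonal argument over $N\to\infty$ yields a sequence $T_n\to-\infty$ along which $(a_{T_n},b_{T_n})\to(a,b)$ in $C^1_{\mathrm{loc}}(\mathbb R)$, and the uniform velocity bounds upgrade this to convergence in the norm \eqref{Norm}, giving the conclusion of part (ii) once $(a,b)$ is identified as a solution. One must check the limit $(a,b)$ again has velocities bounded away from $1$, lies in $\mathcal B$, and keeps $a(t)>b(t)$ — the last from the uniform separation lower bound in \eqref{eq:t+-est}.

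Third, I would verify $(a,b)$ solves \eqref{WF}. The delays $t_i^\pm$ are continuous functionals of the trajectories in the topology of uniform convergence of $(a,\dot a,b,\dot b)$ on the relevant compact sets — this uses that the defining equations \eqref{t+-} have non-degenerate derivative $\dot t_i^\pm = (1\pm\dot a)/(1\pm\dot b)>0$ bounded away from $0$ and $\infty$ by the uniform velocity bounds — so passing to the limit in the integrated form \eqref{WFintTt} (and its counterpart for $b$) along $T_n$ shows $(a,b)$ satisfies the integral equation on all of $\mathbb R$, hence \eqref{WF}, hence is $C^\infty$ by bootstrapping since the right-hand side is smooth. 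Finally, part (i)'s asymptotics: inserting the established uniform bounds into the representation \eqref{Ansatz2} and the subsequent nonrigorous computation identifying $C=\eta_1$ turns that heuristic into a proof for $(a,b)$ — the point is that \eqref{WFinte} is genuinely $O(\ln|t|/t)$ and \eqref{WFintf} is $O(1/t)$ once all the velocity and acceleration bounds are uniform, so choosing $T\to-\infty$ with $T\ll t$ in \eqref{Ansatz2} and using the matching \eqref{ext} pins down $a(t)=x(t)+O(\ln|t|/t)$, and symmetrically for $b$.

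**Main obstacle.**
I expect the hard part to be the \emph{uniformity} of all estimates in $T$, especially the claim that the velocity bound $\|\dot a_T\|_\infty,\|\dot b_T\|_\infty$ stays strictly below $1$ with a constant independent of $T$: Theorem~\ref{conditional} a priori only gives a solution for each fixed $T$, and the delayed nature of \eqref{WF} means a short piece of $a_T$ can depend on a long piece of $b_T$, so one cannot localize. Controlling this requires re-running the a~priori estimates of \cite{DirkGuenter} while tracking the dependence of every constant solely on $(u_{-\infty},v_{-\infty},x_{-\infty},y_{-\infty})$ through the asymptotic data fed in via \eqref{ext}, and exploiting the monotone convergence $\dot a_T\nearrow u_{-\infty}$, $\dot b_T\searrow v_{-\infty}$ as $t\to-\infty$ to get a uniform-in-$T$ sandwich. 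A secondary technical point is justifying the interchange of limit and integral in \eqref{WFintTt} on the unbounded domain, which needs a uniform integrable majorant for the integrand near $-\infty$ — provided precisely by the $O(1/s^2)$ decay that the asymptotics and \eqref{eq:t+-est} supply.
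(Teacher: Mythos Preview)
Your overall architecture matches the paper's: build conditional solutions $(a_T,b_T)$ from the asymptotes, extract uniform bounds, pass to a limit by compactness, and verify the limit solves \eqref{WF}. But two of your steps contain genuine gaps.

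First, the compactness step. You claim that Arzel\`a--Ascoli on compacts plus a diagonal argument gives $C^1_{\mathrm{loc}}$ convergence, and then that ``the uniform velocity bounds upgrade this to convergence in the norm \eqref{Norm}''. That upgrade is false: uniform boundedness $\|\dot a_T\|_\infty\le V$ does not turn $C^0_{\mathrm{loc}}$ convergence of $\dot a_{T_n}$ into $\|\dot a_{T_n}-\dot a\|_\infty\to0$ (think of $\sin(t/n)$). What is actually needed is a uniform tail condition on the velocities, i.e.\ condition \eqref{Kompaktheitsbed}, and for that you need a \emph{global} bound $|\ddot a_T(t)|\le C/(1+t^2)$ with $C$ independent of $T$. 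By \eqref{WF} and \eqref{eq:t+-est} this reduces to a uniform linear lower bound $a_T(t)-b_T(t)\ge D(1+|t|)$ for \emph{all} $t\in\mathbb R$. For large negative $t$ this comes from the asymptotic matching, but for $t\to+\infty$ it is not automatic and requires a separate argument (the paper's Lemma~\ref{lem:4}). Your sketch does not address this.

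Second, your derivation of the asymptotics in part (i) is circular. You propose to apply the representation \eqref{Ansatz2} to the limit $(a,b)$ and let $T\to-\infty$; but \eqref{Ansatz2} contains $a(T)+\dot a(T)(t-T)$, and to extract $x_{-\infty},u_{-\infty},\eta_1$ from that you already need to know the asymptotic form of $a(T)$ --- precisely the statement you are trying to prove. The matching \eqref{ext} helps only for $a_T$, not for the limit $a$. The paper avoids this by proving the uniform estimate $|a_T(t)-x(t)|\le C\ln|t|/|t|$ for the \emph{conditional} solutions (where $a_T(T)=x(T)$ is known exactly), via a continuation/bootstrap argument (Lemmas~\ref{lem:1}--\ref{lem:3}) that alternates between the a~priori separation hypothesis \eqref{Streuung} and the fine estimates \eqref{Geschw}--\eqref{Orte}; the asymptotics for $(a,b)$ then follow by passing to the limit pointwise. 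Your proposal would need an analogous uniform-in-$T$ estimate, which is the real content of the proof and is not supplied by the heuristic after \eqref{Ansatz2}.
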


The proof will be given at the end of this section. Before we will collect some
more technical results.  At the core of our proof are estimates that show a
uniform closeness of the conditional solutions $(a_T,b_T)$ and their velocities
$(\dot a_T,\dot b_T)$ to the asymptotes $(x,y)$ and the asymptotic velocities
$(u_{-\infty},v_{-\infty})$, respectively.
\begin{lemma}\label{lem:1}
Let $T_0$ be a sufficiently large negative number as in Theorem~\ref{mainresult}.
 There are $t_0<T_0$ and $C>0$ such that, for all $T\le t_0$ and $t\in[T,t_0]$,
 \begin{equation}\label{Geschw}
  \begin{split}
   &u_{-\infty} < \dot a_T(t) \le u_{-\infty} -\frac{C}{t}\,, \\
   &v_{-\infty} > \dot b_T(t) \ge v_{-\infty} +\frac{C}{t}
  \end{split}
 \end{equation}
and furthermore, for all $t\le t_0$,
 \begin{equation}\label{Orte}
 \left|a_T(t)-x(t)\right|, \left|b_T(t)-y(t)\right|<C\frac{\ln|t|}{|t|}
 \end{equation}
 with asymptotes $(x,y)$ defined in eq.~\eqref{Asymptoten}, \eqref{eta1}, \eqref{eta2}.
\end{lemma}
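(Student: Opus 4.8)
The plan is to run a bootstrap (continuous-induction) argument on the interval $[T,t_0]$, using the integrated form of the equations of motion \eqref{WFint} together with the already-known facts that $\ddot a_T>0$, $\ddot b_T<0$, and that velocities are bounded away from one. First I would fix $\varepsilon>0$ small (to be chosen) and define, for each $T\le t_0$, the set $I_T$ of those $t\in[T,t_0]$ on which the \emph{a priori} bounds $|\dot a_T(s)-u_{-\infty}|<\varepsilon$, $|\dot b_T(s)-v_{-\infty}|<\varepsilon$, $|a_T(s)-x(s)|<K\ln|s|/|s|$ and $|b_T(s)-y(s)|<K\ln|s|/|s|$ hold for all $s\in[t,t_0]$ (for a constant $K$ to be fixed). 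By the extension \eqref{ext} and the explicit form of the asymptotes \eqref{Asymptoten}, these bounds hold in a neighbourhood of $t_0$ once $t_0$ is negative enough and $K$ is large, so $I_T$ is a nonempty relatively closed subinterval containing $t_0$; the goal is to show $I_T=[T,t_0]$ by proving the bounds self-improve with strict inequality.

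On $I_T$ the geometric estimate \eqref{eq:t+-est} gives $a_T(s)-b_T(t_2^\pm(s))$ comparable to $|s|$ (with constants depending only on $\varepsilon$), so by the equation of motion \eqref{WF} we get $0<\ddot a_T(s)\le c/s^2$ and $-c/s^2\le\ddot b_T(s)<0$. Integrating from $s$ to $t_0$ and comparing with the limiting values yields $|\dot a_T(s)-u_{-\infty}|,|\dot b_T(s)-v_{-\infty}|\le c'/|s|$, which for $|t_0|$ large beats $\varepsilon$; the sign information $\ddot a_T>0$, $\ddot b_T<0$ together with monotone convergence to $(u_{-\infty},v_{-\infty})$ gives the one-sided bounds $u_{-\infty}<\dot a_T(t)\le u_{-\infty}-C/t$ and $v_{-\infty}>\dot b_T(t)\ge v_{-\infty}+C/t$ of \eqref{Geschw} directly, uniformly in $T$. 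For the position estimate \eqref{Orte} I would feed the velocity bounds into \eqref{WFint}: the boundary terms \eqref{WFintb}, \eqref{WFintd} contribute the $\eta_1\ln|t|$ behaviour matching $x(t)$ (this is exactly the computation sketched around \eqref{eta1}, now made rigorous with explicit $O(1/|t|)$ and $O(\ln|t|/|t|)$ error bounds coming from the velocity rates $c'/|s|$), while the single and double integral terms \eqref{WFinte}, \eqref{WFintf} are controlled using that every derivative there is proportional to $\ddot a_T$ or $\ddot b_T$, hence $O(1/s^2)$, giving contributions of order $\ln|t|/|t|$ and $1/|t|$ respectively. Choosing the scaling $T\ll t\ll 0$ — concretely letting the "base point" $T$ in \eqref{WFint} tend to $-\infty$ while keeping $t$ fixed, and using the extension \eqref{ext} so that the contributions at the lower limit are computed against the \emph{known} asymptotes $(x,y)$ — kills the $T$-dependent terms and leaves precisely $a_T(t)=x_{-\infty}+u_{-\infty}t-\eta_1\ln|t|+O(\ln|t|/|t|)$, i.e.\ the improved form of \eqref{Orte} with a constant strictly smaller than $K$; symmetrically for $b_T$.

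Since all constants in the above depend only on $\varepsilon$ and on the fixed data $(x_{-\infty},y_{-\infty},u_{-\infty},v_{-\infty})$ and not on $T$, the self-improvement closes the continuous induction, giving $I_T=[T,t_0]$ for every $T\le t_0$ and hence the claimed uniform bounds. The main obstacle I anticipate is making the passage $T\to-\infty$ in \eqref{WFint} genuinely uniform: one must verify that the boundary terms evaluated at the lower limit, together with the part of the integrals over $[T,t_0]$ coming from the pre-$T$ extension by the asymptotes, combine to give exactly the logarithmic coefficient $\eta_1$ with an error that does not deteriorate as $T\to-\infty$ — this requires a careful splitting of each integral in \eqref{WFinte}, \eqref{WFintf} at an intermediate time and the de l'Hospital/asymptotic identities for $a_T(s)-b_T(t_2^\pm(s))$ and $\ln(a_T(s)-b_T(t_2^\pm(s)))$ used in Section~\ref{asymptotes}, now applied to the conditional solutions rather than to a hypothetical global one. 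A secondary technical point is choosing $t_0$ (and $\varepsilon$, $K$) in the right order so that the repulsive nature of the interaction — encoded in $\ddot a_T>0>\ddot b_T$ and in the fact that $x(t)-y(t)\to+\infty$ linearly — guarantees the trajectories stay separated and subluminal throughout $[T,t_0]$, which is what makes \eqref{eq:t+-est} and the smoothness of $t_i^\pm$ available on the whole interval.
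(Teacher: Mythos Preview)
Your bootstrap strategy is the right one and is essentially how the paper proceeds, but you have anchored the continuous induction at the wrong end of the interval. The only time at which anything is known \emph{a priori} about $(a_T,b_T)$ is the initial time $T$ (and $[T^-,T^+]$ for $b_T$), where \eqref{Anfangsbed} gives $a_T(T)=x(T)$, $\dot a_T(T)=\dot x(T)$, $b_T|_{[T^-,T^+]}=y|_{[T^-,T^+]}$. At $t_0$ the conditional solution has already evolved for a time $t_0-T$ that may be arbitrarily large, so nothing is known about $a_T(t_0)$ or $\dot a_T(t_0)$; the extension \eqref{ext} is irrelevant there since it applies only for $t<T\le t_0$. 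Consequently your set $I_T$ need not contain $t_0$, and integrating $\ddot a_T$ from $s$ to $t_0$ cannot yield $|\dot a_T(s)-u_{-\infty}|\le c'/|s|$ because $\dot a_T(t_0)$ is not yet controlled. The induction must run \emph{forward} from $T$: the paper takes as bootstrap hypothesis the crude separation condition $a_T-b_T\ge\mu t$, $\dot a_T-\dot b_T\le\mu$ on $[T,t^*]$ (which holds at $t^*=T$ by the initial data), derives \eqref{Geschw} and \eqref{Orte} on that interval (Lemma~\ref{lem:2}), and then shows these finer bounds re-imply the crude hypothesis strictly, hence on a larger interval (Lemma~\ref{lem:3}).

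A related problem is your plan to ``let the base point $T$ in \eqref{WFint} tend to $-\infty$''. The identity \eqref{WFint} is obtained by integrating the FST equations \eqref{WF} twice, and the extended trajectory \eqref{ext} does \emph{not} satisfy \eqref{WF} below $T$ (the asymptotes $x,y$ are not solutions), so \eqref{WFint} is unavailable with a lower limit smaller than $T$. The correct move is to use \eqref{WFint} with lower limit exactly the initial time $T$; the boundary contributions at $T$ are then computed from the explicit asymptote values $a_T(T)=x(T)$, $\dot a_T(T)=u_{-\infty}-\eta_1/T$, etc., and one checks directly---this is the decomposition $|a_T(t)-x(t)|\le\sum_{n=1}^5|A_n(t)|$ in the paper---that the $T$-dependent boundary terms combine with $-x(t)$ to leave an $O(\ln|t|/|t|)$ remainder \emph{uniformly in $T\le t\le t_0$}, with no limit taken.
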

Lemma~\ref{lem:1} will be proven with the help of two further lemmata.

Note that the pair of asymptotes $(x,y)$ scatter apart for $t\to-\infty$. The
time instant $t_0$, thus, has to be chosen to be a sufficiently large negative
number such that the choice \eqref{Anfangsbed}, \eqref{T+-} of the initial conditions
on $x$ and $y$ ensures a sufficient decay of the Coulomb terms
$\frac1{\left(a_T(t)-b_T(t_2^\pm)\right)^2}$ in the FST~\eqref{WF} equations for $t\le
t_0$.  At first the estimates \eqref{Geschw} and \eqref{Orte} can
be more easily derived from the FST~\eqref{WF} equations under additional assumptions on
the distance and relative velocity of $a$ and $b$ that ensure such a decay:
\begin{lemma}\label{lem:2}
Let $T_0$ be a sufficiently large negative number as in Theorem~\ref{mainresult}.
Furthermore, let also $t_0<T_0$ be sufficiently large, $T\le t_0$ and
$$\mu:=\frac{u_{-\infty}-v_{-\infty}}2<0\,.$$ Assume that there is
a $t^*\in]T,t_0]$
such that
 \begin{equation}\label{Streuung}
 a_T(t)-b_T(t)\ge\mu t,\quad \dot a_T(t)-\dot b_T(t)\le\mu\quad\text{ for all }t\in[T,t^*]\,.
 \end{equation}
 Then, estimates \eqref{Geschw} and \eqref{Orte} hold true for all $t\in[T,t^*]$ and a suitable $C>0$.
\end{lemma}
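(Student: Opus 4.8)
The plan is to prove Lemma~\ref{lem:2} directly from the integrated FST equations~\eqref{WFint} by plugging in the scattering hypothesis~\eqref{Streuung} to make every Coulomb-type term integrable near $-\infty$, and then bootstrapping the velocity and position estimates in that order. Throughout, $T\le t_0$ is at our disposal and we work on $[T,t^*]$, so all quantities $t_2^\pm(s)$, $t_1^\pm(s)$ are well defined by~\eqref{eq:t+-est}; in fact the hypothesis $\dot a_T-\dot b_T\le\mu<0$ together with $a_T-b_T\ge\mu t>0$ (recall $\mu<0$, $t<0$) means the charges stay well separated with relative velocity bounded away from $0$, which is exactly the ``sufficient decay'' alluded to in the remark after the lemma. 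I would first record the elementary consequences: on $[T,t^*]$ one has $a_T(s)-b_T(t_2^\pm(s))\gtrsim |s|$ by~\eqref{eq:t+-est} and~\eqref{Streuung}, $|\dot a_T|,|\dot b_T|$ are bounded away from $1$ uniformly (this is where one needs $t_0$ chosen so that $x,y$ have velocities $<1$ and the extension~\eqref{ext}, together with $\ddot a_T>0$, $\ddot b_T<0$ from~\eqref{WF}, keeps the velocities trapped between the asymptotic values and the values at $t_0$), and hence $1\pm\dot b_T$, $\dot a_T-\dot b_T$ are bounded away from $0$. All the $s$-derivatives of the prefactors in~\eqref{WFint} are, by the chain rule and~\eqref{tpunkt}, bounded by a constant times $|\ddot a_T(s)|+|\ddot b_T(s)|$, and via~\eqref{WF} each such acceleration is itself $\lesssim 1/s^2$ on the scattering interval.

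Next I would estimate $\dot a_T(t)$. Rather than start from~\eqref{WFintTt} with a finite $T$, the cleaner route is to send the lower limit to $-\infty$: using the just-established $O(1/s^2)$ bound on the integrand of~\eqref{WFintTt} (the Coulomb factors give $1/s^2$, the velocity factors are bounded), the integral converges as $T\to-\infty$, and the boundary term plus $\dot a_T(T)=\dot x(T)=u_{-\infty}-\eta_1/T\to u_{-\infty}$ force $\dot a_T(t)\to u_{-\infty}+\int_{-\infty}^t(\dots)$, with the integral a convergent tail of order $1/|t|$; a matching lower bound comes from the fact that the integrand in~\eqref{WFintTt} is strictly positive (so $\dot a_T$ is strictly increasing, giving $\dot a_T(t)>u_{-\infty}$) while from above the tail integral $\int_t^{\,\cdot}(\dots)\,ds$ can be bounded below by $c/|t|$ because near $-\infty$ the integrand behaves like $\eta_1/s^2$ by~\eqref{eta1}. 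This yields~\eqref{Geschw} for $\dot a_T$ with a suitable $C$; the estimate for $\dot b_T$ is identical with the roles swapped and signs flipped, using~\eqref{eta2}.

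For the position estimate~\eqref{Orte} I would integrate the velocity estimate once more. Write $a_T(t)-x(t)=\big(a_T(t_0)-x(t_0)\big)+\int_{t_0}^t(\dot a_T(s)-\dot x(s))\,ds$; but a cleaner accounting uses~\eqref{WFint} directly together with the asymptotic identifications already carried out in Section~\ref{asymptotes}: the leading affine-plus-logarithm part of~\eqref{WFint} reproduces $x(t)=x_{-\infty}+u_{-\infty}t-\eta_1\ln|t|$ up to the explicit $o(\ln|t|)$ computations, and the residual terms~\eqref{WFinte},~\eqref{WFintf} are, by the $O(1/s^2)$ acceleration bound, of order $\ln|t|/|t|$ and $1/|t|$ respectively on the scattering interval, exactly as asserted in the passage leading to~\eqref{Ansatz2}. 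Here one has to be a little careful that the constants $x_{-\infty}$, $y_{-\infty}$ that emerge from~\eqref{WFint} coincide with the prescribed ones: this is forced by the choice of initial data~\eqref{Anfangsbed},~\eqref{T+-}, i.e.\ $a_T(T)=x(T)$ and $b_T|_{[T^-,T^+]}=y|_{[T^-,T^+]}$, together with letting $T\to-\infty$ so that the $T$-dependent boundary terms in~\eqref{WFintb},~\eqref{WFintd} telescope against the $T$-dependence hidden in $a(T)+\dot a(T)(t-T)$, precisely the nonrigorous cancellation displayed after~\eqref{Ansatz2} but now made rigorous because all error terms are genuinely $o(\cdot)$ uniformly in $T\le t_0$.

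The main obstacle I anticipate is uniformity in $T$: every bound above must hold with one constant $C$ valid for all $T\le t_0$ simultaneously, and the danger is that the $o(\ln|t|)$ remainders in the heuristic derivation of~\eqref{Ansatz}--\eqref{Ansatz2} secretly depend on $T$ through the length of the integration interval. The resolution is that on the scattering interval $[T,t^*]$ the integrand bounds are $O(1/s^2)$ \emph{pointwise in $s$} with a constant depending only on $\mu$ and on the velocity bound at $t_0$ — not on $T$ — so $\int_T^{t}$ and the double integral $\int_T^t\int_T^s$ of such integrands are controlled by $\int_{-\infty}^{t}$-type tails that are manifestly $T$-independent. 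Making this pointwise-in-$s$ control rigorous — in particular closing the small loop that the acceleration bound via~\eqref{WF} needs the separation $a_T-b_T\gtrsim|s|$, which is exactly the hypothesis~\eqref{Streuung}, so there is no circularity — is the technical heart of the argument, and it is also what makes Lemma~\ref{lem:2} merely conditional (on~\eqref{Streuung}) rather than unconditional; removing that hypothesis is the job of the subsequent lemma and of Lemma~\ref{lem:1}.
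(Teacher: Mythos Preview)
Your treatment of the velocity estimate~\eqref{Geschw} is essentially the paper's: monotonicity from $\ddot a_T>0$ gives the lower bound, and bounding the Coulomb integrand in~\eqref{WFintTt} by $C/s^2$ via~\eqref{Streuung} and Lemma~\ref{lem:t+-}, together with $\dot a_T(T)=\dot x(T)=u_{-\infty}-\eta_1/T$, gives the upper bound. (There is no need to ``send the lower limit to $-\infty$''; $T$ is fixed and $\dot a_T(T)$ is known explicitly.)

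The gap is in the position estimate~\eqref{Orte}. Your first suggestion, to integrate $\dot a_T-\dot x$, fails: from~\eqref{Geschw} you only get $|\dot a_T(s)-\dot x(s)|\le C/|s|$, and $\int_T^t C/|s|\,ds=C\ln(|T|/|t|)$, which is \emph{not} bounded uniformly in $T\le t_0$. Your second suggestion, to use~\eqref{WFint}, is the right framework, but you wave away precisely the hard part. The integral remainders~\eqref{WFinte},~\eqref{WFintf} are indeed $O(\ln|t|/|t|)$ and $O(1/|t|)$ --- that is the easy half (the paper's $A_4,A_5$). The difficulty is the \emph{boundary} terms~\eqref{WFinta}--\eqref{WFintd}: one must show that, for \emph{fixed} $T$, the combination $\eqref{WFinta}+\eqref{WFintb}+\eqref{WFintc}+\eqref{WFintd}-x(t)$ is $O(\ln|t|/|t|)$. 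This is not a matter of ``letting $T\to-\infty$'' --- $T$ parametrizes the conditional solution $(a_T,b_T)$ itself and cannot be moved. What is actually needed is a quantitative rate at which the prefactors approach $\eta_1$ and the arguments of the logarithms approach $-(u_{-\infty}-v_{-\infty})/(1\pm v_{-\infty})\cdot t$. The paper organizes this into the terms $A_1,A_2,A_3$ in~\eqref{A1}--\eqref{A3} and proves three auxiliary facts that your outline does not supply: (i) $|\dot b_T(t)-\dot b_T(t_2^\pm)|\le C/|t|$ even though $t_2^+$ may lie beyond $t^*$, where~\eqref{Streuung} is not assumed (Lemma~\ref{lem:apunkt}, proved by a self-referential integral inequality on $[t,t_1^+(t)]$); (ii) the prefactor $\kappa_a(1-\dot a_T^2)^{3/2}(1-\dot b_T^2)/(\dot a_T-\dot b_T)^2$ differs from $\eta_1$ by $O(1/|t|)$ (Lemma~\ref{lem:eta}); and (iii) a mean-value identity for $t/(a_T(t)-b_T(t_2^\pm))$ (Lemma~\ref{lem:letztes}). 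With these, each $A_n$ is bounded by $C\ln|t|/|t|$ \emph{pointwise}, and the $T$-dependence disappears because $|T|\ge|t|$ makes $\ln|T|/|T|\le\ln|t|/|t|$ --- no limit in $T$ is taken.

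In short: you correctly diagnose that uniformity in $T$ is the issue, but your proposed cure (telescoping as $T\to-\infty$) is not available, and the genuine cure --- quantitative $O(1/|t|)$ control of velocity prefactors at the delayed/advanced times, including times outside $[T,t^*]$ --- is missing from your sketch.
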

Nevertheless, the computations are tedious and therefore deferred to
section~\ref{lemma2}.\\

At first sight, this might look circular: We can prove \eqref{Geschw},
\eqref{Orte} under the decay assumption \eqref{Streuung}, but in order to prove
the latter, something like \eqref{Geschw}, \eqref{Orte} (at least in a slightly
weakened form) seems to be necessary. However, by definition of $x$ and $y$,
$\lim_{t\to-\infty}\left(\dot x(t)-\dot y(t)\right)=2\mu$, so that
the initial
conditions \eqref{Anfangsbed} fulfill
\begin{equation}\label{Streuung2}
a_T(T)-b_T(T)=x(T)-y(T)>\mu T
\end{equation}
and $$\dot a_T(T)-\dot b_T(T)=\dot x(T)-\dot y(T)<\mu$$ 
sufficiently large negative $T$. Therefore, if $t_0$ is adapted
conveniently, continuity of $(a_T$,$b_T)$
implies that at least a small interval $[T,t^*]$, potentially 
with $t^*<t_0$, on which
\eqref{Streuung} and thus, by Lemma~\ref{lem:2}, \eqref{Geschw} and \eqref{Orte}, hold
true, exists. But if \eqref{Geschw} and \eqref{Orte} hold
on some interval, one can readily show that \eqref{Streuung} is satisfied on a
larger one:
\begin{lemma}\label{lem:3}
 Let $t_0$ be a sufficiently large negative number, $T$ as in Lemma~\ref{lem:2}, and
 $t^*\in]T,t_0]$ such that estimates \eqref{Geschw} and \eqref{Orte} hold true
 for all $t\in[T,t^*]$. Then, there exists $t^{**}\in]t^*,t_0]$ such that
 \begin{equation}\label{cont-claim}
 a_T(t)-b_T(t)\ge\mu t,\quad \dot a_T(t)-\dot b_T(t)\le\mu\quad\text{ for all }t\in[T,t^{**}]\,.
 \end{equation}
\end{lemma}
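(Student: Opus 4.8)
The plan is to show that both inequalities in \eqref{cont-claim} in fact already hold, with a fixed amount of slack, throughout the given interval $[T,t^*]$ — this follows directly from \eqref{Geschw} and \eqref{Orte} — and then to propagate them a little past $t^*$ by continuity of the conditional solution.

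For the velocity inequality I would argue as follows. On $[T,t^*]$ the estimates \eqref{Geschw} give $\dot a_T(t)\le u_{-\infty}-C/t$ and $\dot b_T(t)\ge v_{-\infty}+C/t$, hence, since $t<0$,
$$\dot a_T(t)-\dot b_T(t)\;\le\;(u_{-\infty}-v_{-\infty})-\frac{2C}{t}\;=\;2\mu+\frac{2C}{|t|}\,.$$
As $C$ is the fixed constant produced by Lemma~\ref{lem:2} and does not depend on $T$, one may choose $t_0$ so negative that $2C/|t_0|\le|\mu|/2$; then $|t|\ge|t_0|$ for all $t\in[T,t^*]\subseteq[T,t_0]$ gives $\dot a_T(t)-\dot b_T(t)\le\tfrac32\mu<\mu$ on all of $[T,t^*]$, with slack $|\mu|/2$.

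For the position inequality I would insert the explicit asymptotes \eqref{Asymptoten} and use \eqref{Orte}:
$$a_T(t)-b_T(t)\;\ge\;\bigl(x(t)-y(t)\bigr)-\frac{2C\ln|t|}{|t|}\;=\;(x_{-\infty}-y_{-\infty})+2\mu t-(\eta_1+\eta_2)\ln|t|-\frac{2C\ln|t|}{|t|}\,,$$
so that $a_T(t)-b_T(t)-\mu t\ge(x_{-\infty}-y_{-\infty})+\mu t-(\eta_1+\eta_2)\ln|t|-2C\ln|t|/|t|$. Since $\eta_1,\eta_2>0$ by \eqref{eta1}, \eqref{eta2} and $\mu<0$, the linear term $\mu t=|\mu|\,|t|$ dominates the logarithmic and constant corrections as $t\to-\infty$; enlarging $|t_0|$ once more (by an amount depending only on $x_{-\infty},y_{-\infty},\eta_1,\eta_2,\mu,C$, not on $T$) therefore yields $a_T(t)-b_T(t)-\mu t\ge\tfrac12|\mu|\,|t|>0$ for all $t\le t_0$, in particular on $[T,t^*]$.

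It remains to propagate these strict inequalities beyond $t^*$. On $[T,t_0]$ both $a_T$ and $b_T$ are at least $C^1$: $a_T$ is the smooth conditional solution of Theorem~\ref{conditional} on $[T,\infty[$, and $b_T$ equals the asymptote $y$ up to time $T^+$ and the conditional solution on $[T^+,\infty[$, the two matching $C^1$ at the junction $T^+$ by construction (cf.\ \eqref{ext}, Theorem~\ref{conditional}). Hence the maps $t\mapsto\mu-\bigl(\dot a_T(t)-\dot b_T(t)\bigr)$ and $t\mapsto\bigl(a_T(t)-b_T(t)\bigr)-\mu t$ are continuous on $[T,t_0]$. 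By the two estimates above they are strictly positive at $t=t^*$, so, by continuity, they remain positive on $[t^*,t^{**}]$ for some $t^{**}\in\,]t^*,t_0]$ (here one uses $t^*<t_0$, which is implicit since $]t^*,t_0]$ must be nonempty). Together with their positivity on $[T,t^*]$ this yields \eqref{cont-claim} on $[T,t^{**}]$, as claimed. I expect no genuine analytic obstacle in this lemma; the one point that needs care is to fix $t_0$ once and for all so that both smallness requirements on $2C/|t_0|$ above are met simultaneously and uniformly in $T\le t_0$ — which is possible precisely because the constant $C$ furnished by Lemma~\ref{lem:2} is independent of $T$.
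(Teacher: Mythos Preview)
Your proof is correct and follows essentially the same route as the paper's: both use \eqref{Geschw} to get the strict velocity inequality with slack $\mu$ and \eqref{Orte} together with the explicit form \eqref{Asymptoten} of the asymptotes to get the strict position inequality, then invoke continuity to push a bit past $t^*$. You are in fact slightly more careful than the paper in two respects: you justify the $C^1$ regularity of $b_T$ across the junction $T^+$, and you flag the edge case $t^*=t_0$ (which the paper glosses over but which is harmless in the only place Lemma~\ref{lem:3} is applied).
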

By iterating Lemma~\ref{lem:2} and \ref{lem:3}, one then arrives at Lemma~\ref{lem:1}.
\begin{proof}[Proof of Lemma~\ref{lem:3}]
By assumption~\eqref{Orte} and definition~\eqref{Asymptoten} of $x$ and $y$,
 \begin{equation}\label{dist}
  \begin{split}
  a_T(t)-b_T(t)&\ge x(t)-y(t)-\frac{2C}{\sqrt{|t|}}\\
  &=\mu t+\left(x_{-\infty}-y_{-\infty}+|\mu||t|-(\eta_1+\eta_2)\ln|t|)-\frac{2C}{\sqrt{|t|}}\right)
  \end{split}
 \end{equation}
for $t\in[T,t^*]$. The bracket becomes positive for sufficiently large $|t_0|$ and $t\le t_0$, so
$$a_T(t)-b_T(t)>\mu t$$
in this case and, by continuity, ``$\ge$'' is satisfied on on
some larger closed interval. Moreover, by assumption~\eqref{Geschw},
\begin{align}
    \label{vel}
 \dot a_T(t)-\dot b_T(t) \le u_{-\infty}-v_{-\infty}+\frac{C}{|t|}
 \le \mu+\left(\mu+\frac{C}{|t_0|}\right)
\end{align}
for $t\in[T,t^*]$. Here, the bracket becomes negative for sufficiently large $|t_0|$, implying
$$\dot a_T(t)-\dot b_T(t)<\mu\text{ for }t\in[T,t^*]$$ and, again,  
``$\le$'' holds on some larger interval. This implies the existence of a
$t^{**}\in ]t^*,t_0]$ such that \eqref{cont-claim} holds.
\end{proof}
Now we prove Lemma~\ref{lem:1}: 
\begin{proof}[Proof of Lemma~\ref{lem:1}]
 Fix a sufficiently large negative $t_0$ such that both Lemma~\ref{lem:2} and \ref{lem:3} are applicable
 and inequality~\eqref{Streuung2} holds true for all $T\le t_0$.
 Iterated application of these two lemmata reveals that inequalities~\eqref{Streuung} hold true on $[T,t_0]$: Assuming the contrary, fix $T$ as required and define
 $$\tilde t:=\inf\left\{t\in[T,t_0]\mid a_T(t)-b_T(t)<\mu t \vee \dot
 a_T(t)-\dot b_T(t)>\mu\right\}\,.$$ 
 By continuity of $a_T$ and $b_T$, inequalities~\eqref{Streuung} are fulfilled on $[T,\tilde t]$, so Lemma~\ref{lem:2} implies that Lemma~\ref{lem:3} is applicable with $t^*=\tilde t$, resulting in a contradiction to the definition of $\tilde t$.
 
 By definition \eqref{ext}, $a_T(t)-x(t)=0=b_T(t)-y(t)$ for $t\le T$, so
 inequalities~\eqref{Orte} are also valid for such times $t$.
\end{proof}

Beyond the estimates in Lemma~\ref{lem:1} for large negative $t$, we need a priori
estimates for all $t\in\mathbb R$ that prevent the terms in the FST equations
\eqref{WF}
from becoming singular:
\begin{lemma}\label{lem:4}
 Let $t_0$ be a sufficiently large negative number as in Lemma~\ref{lem:1}.
 There are $V\in[0,1[$ and $D>0$ such that all conditional solutions for $(a_T,b_T)$
     with $T\le t_0$ corresponding to initial conditions \eqref{Anfangsbed}
     with
     asymptotes in \eqref{Asymptoten} satisfy
 \begin{equation}\label{V}
 \|\dot a_T\|_\infty,\|\dot b_T\|_\infty\le V
 \end{equation}
 and
 \begin{equation}\label{D}
 a_T(t)-b_T(t)\ge D(1+|t|)\text{ for all }t\in\mathbb R\,.
 \end{equation}
\end{lemma}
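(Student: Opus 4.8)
The plan is to establish \eqref{V} and \eqref{D} separately on $]-\infty,t_0]$ and on $[t_0,\infty[$, splitting at the threshold $t_0$ of Lemma~\ref{lem:1}. On $]-\infty,t_0]$ nothing new is needed: by \eqref{Orte} and the explicit form \eqref{Asymptoten} of the asymptotes, $a_T(t)-b_T(t)\ge x(t)-y(t)-2C\tfrac{\ln|t|}{|t|}=|\mu|\,|t|+O(\ln|t|)$, which is $\ge D(1+|t|)$ once $|t_0|$ is large enough; and \eqref{Geschw} confines $\dot a_T,\dot b_T$ to a fixed compact subinterval of $]-1,1[$ on $[T,t_0]$, while on $]-\infty,T[$ one has $\dot a_T=\dot x$ (and on $]-\infty,T^-[$, $\dot b_T=\dot y$), which are bounded away from $\pm1$ by the hypotheses of Theorem~\ref{mainresult}. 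Since the constant $C$ in Lemma~\ref{lem:1} does not depend on $T$, all these bounds are uniform in $T\le t_0$.

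For $t\ge t_0$ I would first isolate the facts that hold with \emph{no} a priori input. Taking $|t_0|$ large enough that both equations of motion in \eqref{WF} are available on $[t_0,\infty[$ (automatic for $a_T$ since $T\le t_0$; for $b_T$ it holds from $T^+$ on, and on the remaining transition interval $b_T$ coincides with $y$) and using that the right-hand sides of \eqref{WF} are strictly positive, resp.\ negative, whenever the velocities lie in $]-1,1[$, one gets $\ddot a_T>0$ and $\ddot b_T<0$ there. Hence $\dot a_T$ increases and $\dot b_T$ decreases, so for all $t\ge t_0$
$$u_{-\infty}<\dot a_T(t_0)\le\dot a_T(t)<1,\qquad -1<\dot b_T(t)\le\dot b_T(t_0)<v_{-\infty},$$
which already provides a $T$-uniform margin in the retarded velocity factors $\tfrac{1+\dot b_T(t_2^-)}{1-\dot b_T(t_2^-)}$, $\tfrac{1-\dot a_T(t_1^-)}{1+\dot a_T(t_1^-)}$ of \eqref{WF}. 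Moreover $g:=a_T-b_T$ is $C^1$ with $\ddot g>0$, hence convex, with $g\to+\infty$ and $g(t)\ge\tfrac{|\mu|}{2}|t|$ as $t\to-\infty$, and by the geometric estimate \eqref{eq:t+-est} — which uses only $|\dot a_T|,|\dot b_T|<1$ — the Coulomb denominators satisfy $a_T(s)-b_T(t_2^\pm(s))\ge\tfrac12 g(s)$ and $a_T(t_1^\pm(s))-b_T(s)\ge\tfrac12 g(s)$.

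It then remains to show, uniformly in $T$, that on $[t_0,\infty[$ (a) the velocities stay away from $\pm1$ and (b) $g(t)\ge D(1+|t|)$. For (b): $\dot g=\dot a_T-\dot b_T$ is increasing, hence converges; if its limit $u_{+\infty}-v_{+\infty}$ (with $u_{+\infty}:=\lim\dot a_T$, $v_{+\infty}:=\lim\dot b_T$) were $\le0$, then $g$ would be eventually non-increasing and bounded, which one rules out by a short argument — if the velocities stay away from $\pm1$, \eqref{eq:t+-est} forces $\ddot a_T$ bounded below by a positive constant and hence $\dot a_T\to\infty$; if a velocity reaches $\pm1$, boundedness of $g=a_T-b_T$ forces \emph{both} velocities to the same $\pm1$, against their opposite monotonicities; and the collision case $g\to0$ fails because $\ddot g\gtrsim g^{-2}$ together with $\dot g<0$ makes $\tfrac12\dot g^2+c\,g^{-1}$ simultaneously non-increasing and divergent. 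Hence $\dot g$ tends to a positive limit, $g$ grows at least linearly, and with convexity and the bound for $t\le t_0$ this gives \eqref{D}, with $D$ uniform because $g(t_0)$, $\dot g(t_0)$ and the whole history on $]-\infty,t_0]$ are uniformly controlled by Lemma~\ref{lem:1}. For (a) I would integrate the relativistic momenta $p_a=\dot a_T/\sqrt{1-\dot a_T^2}$, $p_b=\dot b_T/\sqrt{1-\dot b_T^2}$: the retarded contributions to $\int_{t_0}^\infty\dot p_a$, $\int_{t_0}^\infty|\dot p_b|$ are dominated directly using the free velocity bounds, \eqref{eq:t+-est} and \eqref{D}, while the advanced contributions are treated by the substitution $s\mapsto w=t_2^+(s)$ (resp.\ $t_1^+(s)$), whose Jacobian — by \eqref{tpunkt} — cancels the singular factor $(1+\dot b_T(t_2^+))^{-1}$ against the velocity factor $\tfrac{1-\dot b_T(t_2^+)}{1+\dot b_T(t_2^+)}$, turning them into integrals of the form $\int\tfrac{1-\dot b_T(w)}{1+\dot a_T(s(w))}\,\tfrac{dw}{(w-s(w))^2}$ with $w-s(w)=a_T(s(w))-b_T(w)\ge\tfrac12 g(s(w))\ge\tfrac D2(1+|s(w)|)$; this is finite and, using (b), $T$-uniform, so $p_a(t_0)+\int_{t_0}^\infty\dot p_a$ stays below a $T$-independent bound and \eqref{V} follows.

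The step I expect to be the genuine obstacle is breaking the circularity between (a) and (b) in the presence of the advanced terms: the Coulomb denominators are controlled only through \eqref{D}, whose proof needs the velocities away from $\pm1$, yet the advanced arguments $t_i^+(s)$ may leave any finite interval on which control has so far been secured, so a naive bootstrap on a finite interval is not causally closed. I would handle this by running (a) and (b) as a single continuity argument on $[t_0,\infty[$, choosing the thresholds $V$ close to $1$ and $D$ small so that the momentum estimates above close the loop; the change of variables is what makes this feasible, since it strips the integrands of the singular factors $(1\pm\dot b_T)^{-1}$, $(1\pm\dot a_T)^{-1}$ and leaves only quantities controlled by the free monotonicity and by $g$. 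A more structural alternative worth trying first is the conserved energy of the Fokker action behind \eqref{WF}: as the interaction is repulsive ($\kappa_a,\kappa_b>0$) its potential part is non-negative, so the asymptotically determined — hence $T$-uniform — total energy bounds $\dot a_T/\sqrt{1-\dot a_T^2}$ from above and $\dot b_T/\sqrt{1-\dot b_T^2}$ from below at once; failing that, one may simply quote \cite[Proposition~2.1]{DirkGuenter} for each fixed $T$ and check that its constants depend on the data only through quantities bounded uniformly by Lemma~\ref{lem:1}.
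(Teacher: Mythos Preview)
Your fallback---quoting \cite[Proposition~2.1]{DirkGuenter}---is exactly what the paper does for \eqref{V}, and it is the structural key rather than a last resort. The paper remarks explicitly that \eqref{V} must be obtained \emph{independently} of Lemma~\ref{lem:1}, since \eqref{V} is used throughout the proof of Lemma~\ref{lem:2} (on which Lemma~\ref{lem:1} itself rests). That proposition gives a velocity bound depending only on the initial data $(a_0,\dot a_0,b_0)$, and since these are segments of the fixed asymptotes $(x,y)$ via \eqref{Anfangsbed}, uniformity in $T\le t_0$ is immediate. With \eqref{V} secured, the paper also cites \cite[(30)]{DirkGuenter} for a uniform positive lower bound $\tilde D$ on $a_T-b_T$, and then obtains the linear growth in \eqref{D} by a short quantitative contradiction: if $\dot a_T-\dot b_T\le0$ on $[S,\tilde S]$ the distance stays bounded above, so---now using \eqref{V} to control the relativistic factor $(1-\dot a_T^2)^{3/2}$---the accelerations are bounded below by a uniform positive constant, forcing $\dot a_T-\dot b_T>0$ by a time $\tilde S$ independent of $T$.

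Your primary route, deriving \eqref{V} and \eqref{D} simultaneously from the equations of motion, has two concrete gaps beyond the circularity you flag. First, in the collision branch of (b), the inequality $\ddot g\gtrsim g^{-2}$ is unjustified: the factor $(1-\dot a_T^2)^{3/2}$ in \eqref{Beschleunigung} may vanish, since $\dot a_T\to1$ is precisely what has not yet been excluded; working with the momenta $p_a,p_b$ rather than the accelerations would repair this. Second, your (b) shows $\dot g\to L>0$ only for each fixed $T$; a uniform $D$ requires a uniform time by which $\dot g$ turns positive, and the qualitative case analysis does not track the constants needed to supply one. The change-of-variables for the advanced integrals is correct and might close a continuity argument, but it is unnecessary once Proposition~2.1 of \cite{DirkGuenter} is invoked. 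The conserved-energy idea is problematic for FST: the Fokker action's conserved quantity is a functional of trajectory segments, not an instantaneous kinetic-plus-potential sum, and does not directly bound $p_a(t)$ at a single time.
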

Before we discuss the proof, we remark that \eqref{V} is a corollary to
Proposition 2.1 from \cite{DirkGuenter}, but \eqref{D} is deduced from
Lemma~\ref{lem:1}. This poses no problem since \eqref{D} will only be exploited
in the proof of the main result Theorem \ref{mainresult}, whereas the estimate
\eqref{V} will be used throughout the proof of Lemma~\ref{lem:2}.  In the proof
of Lemma~\ref{lem:4}, and
whenever dealing with the advanced and retarded times, the following estimates,
already mentioned before in \eqref{eq:t+-est} and proven in \cite[Lemma
2.1]{DirkGuenter}, will be useful:
\begin{lemma}\label{lem:t+-}
 For any $C^1$-trajectories $(a,b)$ with $\|\dot a\|_\infty,\|\dot b\|_\infty\le C<1$ and $a(t)>b(t)$ for all $t\in\mathbb R$,
 the advanced and retarded times $t_i^\pm$ introduced in \eqref{t+-} are globally well-defined and
 \begin{equation*}
  \begin{split}
 &\frac{a(t)-b(t)}2\le a(t)-b(t_2^\pm(a(t),b,t))\le\frac{a(t)-b(t)}{1-\|\dot b\|_\infty}\,, \\
 &\frac{a(t)-b(t)}2\le a(t_1^\pm(a,b(t),t))-b(t)\le\frac{a(t)-b(t)}{1-\|\dot a\|_\infty} \,.
 \end{split}
 \end{equation*}
\end{lemma}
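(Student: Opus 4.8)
The plan is to establish the statement for each of the four implicitly defined times in turn; since the defining relations \eqref{t+-} are invariant under interchanging the two trajectories (together with a time reflection, which swaps the superscripts $+$ and $-$), it suffices to carry out the argument for $t_2^+$ in detail and then transcribe it for the remaining three.

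For well-definedness, fix $t\in\mathbb R$ and study the function $g(s):=s-t-|a(t)-b(s)|$ on $[t,\infty[$. The map $s\mapsto|a(t)-b(s)|$ is Lipschitz with constant $\|\dot b\|_\infty<1$, so for $t\le s_1<s_2$ one gets $g(s_2)-g(s_1)\ge(1-\|\dot b\|_\infty)(s_2-s_1)>0$; hence $g$ is strictly increasing. Since $g(t)=-(a(t)-b(t))<0$ by the standing assumption $a>b$, while $|a(t)-b(s)|\le(a(t)-b(t))+\|\dot b\|_\infty(s-t)$ forces $g(s)\ge(1-\|\dot b\|_\infty)(s-t)-(a(t)-b(t))\to\infty$ as $s\to\infty$, the intermediate value theorem together with strict monotonicity yields a unique zero, which is $t_2^+$. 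The cases $t_2^-$ (restrict to $s\le t$) and $t_1^\pm$ (replace $b$ by $a$, so $\|\dot a\|_\infty$ enters instead of $\|\dot b\|_\infty$) are handled identically.

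For the quantitative bounds, set $\tau:=t_2^+\ge t$, so that $\tau-t=|a(t)-b(\tau)|$. Combining $|b(\tau)-b(t)|\le\|\dot b\|_\infty(\tau-t)$ with the triangle inequality in both directions gives
\[
(a(t)-b(t))-\|\dot b\|_\infty(\tau-t)\;\le\;\tau-t\;\le\;(a(t)-b(t))+\|\dot b\|_\infty(\tau-t),
\]
and solving for $\tau-t$, using $\|\dot b\|_\infty<1$, yields $\frac{a(t)-b(t)}{1+\|\dot b\|_\infty}\le\tau-t\le\frac{a(t)-b(t)}{1-\|\dot b\|_\infty}$. It remains to drop the modulus, i.e.\ to show $a(t)\ge b(\tau)$: were $b(\tau)>a(t)$, then $\tau-t=b(\tau)-a(t)=(b(\tau)-b(t))-(a(t)-b(t))\le\|\dot b\|_\infty(\tau-t)-(a(t)-b(t))$, forcing $(1-\|\dot b\|_\infty)(\tau-t)\le-(a(t)-b(t))<0$, a contradiction. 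Hence $a(t)-b(t_2^+)=\tau-t$, and since $1+\|\dot b\|_\infty<2$ the claimed inequalities for $t_2^+$ follow. The bound for $t_2^-$ is obtained verbatim with $\sigma:=t_2^-\le t$ and $t-\sigma$ in place of $\tau-t$, and those for $t_1^\pm$ by running the same computation with the two trajectories exchanged, so that $\|\dot a\|_\infty$ and $a(t_1^\pm)-b(t)$ play the roles of $\|\dot b\|_\infty$ and $a(t)-b(t_2^\pm)$.

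No step is genuinely hard; the only point requiring care is the sign argument that removes the absolute value, which is precisely where both hypotheses $a>b$ and $\|\dot a\|_\infty,\|\dot b\|_\infty<1$ are used. (This reproduces \cite[Lemma 2.1]{DirkGuenter}, whose proof proceeds along these lines.)
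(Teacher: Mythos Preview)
Your proof is correct. The paper does not supply its own proof of this lemma but simply cites \cite[Lemma 2.1]{DirkGuenter}; your argument is the natural one and presumably coincides with the one given there, so there is nothing further to compare.
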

 
\begin{proof}[Proof of Lemma~\ref{lem:4}:]
 According to the proof of Proposition 2.1 in \cite{DirkGuenter} (the equation after 29), for any conditional solution with initial data $a_T(T)=a_0$, $\dot a_T(T)=\dot a_0$, $\left.b_T\right|_{[T^-,T^+]}=b_0$ the estimate
 \begin{equation*}
  \sup_{t\ge T}|\dot a_T(t)|\le\sqrt{1-\frac{(1-\|\dot b_0\|_\infty)^2}{\left(\frac4{\sqrt{1-\left(\max(\dot a_0,\|\dot x_0\|_\infty)\right)^2}+\frac{3\kappa_a}{a_0-b_0(T)}}\right)^2}}
 \end{equation*}
 holds true. In \cite{DirkGuenter}, it was formulated only for the case $T=0$, but it holds without change for different $t$. $x_0$ denotes a reference trajectory, which, in the case at hand, can be chosen in such a way that $\|\dot x_0\|_\infty=u$ where $u$ is an arbitrary number between $\sup_{t\le T_0}|\dot x(t)|$ and 1.
 Substituting as initial data the corresponding segments of the asymptotes $x,y$ and recalling from their explicit form~\eqref{Asymptoten} and the choice of $t_0\le T_0$ that their velocities are bounded away from 1 and the distances $a_0-b_0(T)$ from 0, we conclude
 \begin{equation*}
  \sup_{T\le t_0}\sup_{t\ge T}|\dot a_T(t)|<1\,.
 \end{equation*}
Since $a_T(t)=x(t)$ for $t\le T$, we get $$\sup_{T\le t_0}\|\dot a_T\|_\infty<1$$
and, by the analogous reasoning for $b$, the existence of $V\in]0,1[$ such that $$\|\dot a_T\|_\infty,\|\dot b_T\|\le V$$ for all $T\le t_0$. Estimate (30) in \cite{DirkGuenter} then implies the existence of a uniform lower bound for $\inf_{t\ge T}\left(a_T(t)-b_T(t)\right)$ and $T\le t_0$ which, as the distance of the asymptotes is bounded away from 0, can again be extended to a bound $\tilde D$ valid for all $t\in\mathbb R$.

In order to prove \eqref{D}, it remains to find a negative upper bound for $\dot a_T(t)-\dot b_T(t)$ for sufficiently large negative $t$ and a positive lower bound for sufficiently large $t$, both uniform in $T$. The first of them is immediately given by Lemma~\ref{lem:1}, inequality~\eqref{Geschw}, and the fact that $u_{-\infty}-v_{-\infty}<0$. To find the latter, we employ a proof by contradiction. We observe that, according to Lemma~\ref{lem:1}, we can find $S$, $V^\prime$ and $D^\prime$ such that $a_T(S)-b_T(S)\le D^\prime$ and $\dot a_T(S)-\dot b_T(S)\ge V^\prime$ hold true for all $T\le t_0$.
Assuming now that $\dot a_T(t)-\dot b_T(t)\le0$ for all $t\in[S,\tilde S]$ with $\tilde S>S$, Lemma~\ref{lem:t+-} together with the uniform velocity bound~\eqref{V} implies
$$a(t)-b(t_2^+)\le\frac{D^\prime}{V}$$ for these $t$
and thus, due to~\eqref{Beschleunigung} and the equation~\eqref{WF} of motion, yields a uniform lower bound
\begin{equation*}
 \ddot a_T(t)\ge\frac{\kappa_a}2(1-\dot a_T(t)^2)^{\frac32}\frac{1-\dot b_T(t_2^+)}{1+\dot b_T(t_2^+)}\frac1{\left(a_T(t)-b_T(t_2^+)\right)^2}
 \ge \frac{\kappa_a(1-V^2)^{\frac32}(1-V)}{2(1+V){D^\prime}^2}=:C_1\,.
\end{equation*}
on the acceleration of particle $a$. Analogously, one obtains
$$\ddot b_T(t)\le-C_2$$ and, contrary to the assumption,
$$\dot a_T(\tilde S)-\dot b_T(\tilde S)\ge V^\prime+\int_S^{\tilde S}(C_1+C_2)\text ds$$
must be positive for a sufficiently large $\tilde S$ independent of $T$.

From time $\tilde S$ on, one gets a uniform upper bound on $a(t)-b(t_2^+)$ by assuming that $a$ moved to the right and $b$ to the left with the maximal possible velocity $V$ already from time $S$ on. The resulting lower bound on $\ddot a_T(t)$ for $t\ge\tilde S$ and the analogous upper one on $\ddot b_T(t)$ lead to the desired uniform positive lower bound on $\dot a_T(t)-\dot b_T(t)$ e.g. for times $t\ge\tilde S+1$.
\end{proof}

The sequence $\left(a_{T_n},b_{T_n}\right)$ as in the Theorem can now be found
by a compactness argument. We formulate an appropriate generalization of the
Arzela-Ascoli theorem, the proof of which can be found in \cite[Lemma 2.3]{DirkGuenter}:
\begin{lemma}\label{lem:Arzela}
 If a sequence $f_1,f_2,\dots$ of bounded continuous functions on $\mathbb R$ is uniformly bounded and equicontinuous and
 \begin{equation}\label{Kompaktheitsbed}
 \lim_{S\to\infty}\sup_{t>S,n\in\mathbb
 N}\max\left\{|f_n(t)-f_n(S)|,|f_n(-t)-f_n(-S)|\right\}=0\,,
 \end{equation}
 then it has a uniformly convergent subsequence.
\end{lemma}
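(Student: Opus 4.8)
\emph{Proof proposal.} The plan is to combine the classical Arzel\`a--Ascoli theorem on compact intervals with a diagonal extraction, and then to use the tail condition \eqref{Kompaktheitsbed} to upgrade local uniform convergence to uniform convergence on all of $\mathbb R$.

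First I would exploit the hypotheses on $[-k,k]$ for each $k\in\mathbb N$: since $(f_n)$ is uniformly bounded and equicontinuous, the classical Arzel\`a--Ascoli theorem yields a subsequence whose restriction to $[-k,k]$ converges uniformly. Passing successively to nested subsequences for $k=1,2,\dots$ and then taking the diagonal sequence, I obtain a single subsequence $(f_{n_j})_{j\in\mathbb N}$ that converges uniformly on every compact subset of $\mathbb R$; in particular it is uniformly Cauchy on each interval $[-S,S]$.

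Next I would turn this into a global statement. Fix $\varepsilon>0$. By \eqref{Kompaktheitsbed} choose $S>0$ so large that $\sup_{t>S,\,n\in\mathbb N}\max\{|f_n(t)-f_n(S)|,\ |f_n(-t)-f_n(-S)|\}<\varepsilon/3$. Since $(f_{n_j})$ is uniformly Cauchy on $[-S,S]$, there is $J$ with $\sup_{|s|\le S}|f_{n_j}(s)-f_{n_l}(s)|<\varepsilon/3$ for all $j,l\ge J$. For $t>S$ I would then insert the point $s=S$ and estimate
\[
 |f_{n_j}(t)-f_{n_l}(t)|\le |f_{n_j}(t)-f_{n_j}(S)|+|f_{n_j}(S)-f_{n_l}(S)|+|f_{n_l}(S)-f_{n_l}(t)|<\varepsilon ,
\]
and symmetrically for $t<-S$ through the point $s=-S$. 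Combined with the estimate on $[-S,S]$ this gives $\|f_{n_j}-f_{n_l}\|_\infty<\varepsilon$ for $j,l\ge J$, so $(f_{n_j})$ is a Cauchy sequence in the Banach space of bounded continuous functions on $\mathbb R$ with the supremum norm, hence converges uniformly to a bounded continuous limit.

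The only point requiring any care is the bookkeeping needed to control the two tails $t\to+\infty$ and $t\to-\infty$ at once, which is precisely what the combined $\max\{\cdot,\cdot\}$ in \eqref{Kompaktheitsbed} is tailored for; aside from that the argument is the standard diagonal extraction, so I expect no genuine obstacle, the statement being a soft functional-analytic fact.
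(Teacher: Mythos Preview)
Your argument is correct: the diagonal extraction from Arzel\`a--Ascoli on the exhaustion $[-k,k]$ followed by the $\varepsilon/3$ tail estimate via \eqref{Kompaktheitsbed} is exactly the right mechanism, and the bookkeeping is clean. Note that the paper does not actually prove this lemma but defers to \cite[Lemma 2.3]{DirkGuenter}; your proposal is the standard proof one would expect to find there, so there is nothing to contrast.
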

Finally, with these technical lemmata, we can prove our main result:
\begin{proof}[Proof of Theorem \ref{mainresult}]
    Recall the family of solutions on the half-line $(a_T,b_T)_{T\leq T_0}$
    corresponding to the initial conditions \eqref{Anfangsbed} that are provided
    by Theorem~\ref{conditional}. The proof is divided in several steps:\\

    \emph{1) Existence of an accumulation point:}
 We start by observing that $(\dot a_T)_{T\le t_0}$ is uniformly bounded
 according to Lemma~\ref{lem:4}. For $t\ge T$, according to the FST
 equations~\eqref{WF} and Lemmata~\ref{lem:t+-} and \ref{lem:4},
 \begin{align*}
  0<\ddot a_T(t)\le\frac{\kappa_a}2\left(1-V^2\right)^{\frac32}\left[2\cdot\frac2{1-V}\frac{4}{D^2(1+|t|)^2}\right]
  \le\frac{8\kappa_a}{(1-V)D^2(1+|t|)^2}\,.
 \end{align*}
For $t\le T$, $a_T(t)=x(t)$, so, by definition~\eqref{Asymptoten},
\begin{equation*}
 0<\ddot a_T(t)=\frac{\eta_1}{t^2}
\end{equation*}
and therefore, $$0<\ddot a_T(t)\le\frac{C}{1+t^2}$$ holds for a suitable $C>0$
for all $t\leq T\leq t_0$. Consequently, for all $s,t\in\mathbb R$ with $s<t$, we have $$|\dot
a_T(t)-\dot a_T(s)|\le\int_s^t\frac{C}{1+u^2}\text du\,,$$ implying that 
$(\dot a_T)_{T\le t_0}$ is equicontinuous and that condition \eqref{Kompaktheitsbed} from
Lemma~\ref{lem:Arzela} is fulfilled, 
so that a uniformly convergent sequence
$\left(\dot a_{T_n}\right)_{n\in\mathbb N}$ with
$T_n\xrightarrow{n\to\infty}-\infty$ exists. 

Furthermore, choosing $t\le t_0$ and using
estimate~\eqref{Orte} from Lemma~\ref{lem:1} together with $|\dot a_T(t)|\le1$,
gives
\begin{equation*}
 \left|a_T(0)\right|=\left|a_T(t)+\int_t^0\dot a_T(s)\text ds\right|\le|x(t)|+|t|\,.
\end{equation*}
This shows us that $(a_T(0))_{T\le t_0}$ is bounded. Therefore, for a suitable
subsequence $(T_{n_k})_{k\in\mathbb N}$ of $(T_n)_{n\in\mathbb N}$, also
$(a_{T_{n_k}}(0))_{k\in\mathbb N}$ converges. From
$(T_{n_k})_{k\in\mathbb N}$, we can, by an analogous reasoning, extract a subsequence
$(T_{n_{k_i}})_{i\in\mathbb N}$ such that also $(\dot
b_{T_{n_{k_i}}})_{i\in\mathbb N}$
and $(b_{T_{n_{k_i}}}(0))_{i\in\mathbb N}$ converge. In conclusion, there is a
subsequence, with slight abuse of notation again denoted by
$(a_{n},b_{n})_{n\in\mathbb N}$, which  converges
with respect to the norm~\eqref{Norm}.\\

\emph{2) Estimates for the accumulation point.} Estimate~\eqref{Orte} in Lemma~\ref{lem:1}, as well as Lemma~\ref{lem:4}, hold
also true for $$(a,b):=\lim_{n\to\infty}(a_n,b_n)\,.$$ This can be seen by
applying these estimates to $(a_n(t),b_n(t))$ for fixed $t$ and exploiting the
convergence $a_n(t)\to a(t)$ and $b_n(t)\to b(t)$ for
$n\to\infty$. Therefore, 
\begin{align}
    \label{order}
a(t)=x(t)+O\left(\frac{\ln|t|}{|t|}\right)\,,\qquad
b(t)=y(t)+O\left(\frac{\ln|t|}{|t|}\right)\,. 
\end{align}

\emph{3) Fulfillment of the FST equations.}  First, we observe that, according to Lemma~\ref{lem:4} and
\ref{lem:t+-}, for $(a,b)$, the advanced and retarded times $t_i^\pm$ in
\eqref{t+-} are well-defined, and so is the right-hand sides of the FST
equations. It remains to show that $(a,b)$
solve the FST equations. The integral equation fulfilled by $a_n$ is given by
\begin{align}
    \label{integral}
    \begin{split}
 \dot a_n(t)=
 \dot a_n(T)+\frac{\kappa_a}{2}\int_T^t(1-\dot a_n(u)^2)^{\frac32} &\left[\frac{1+\dot b_n\left(t_2^-\right)}{1-\dot b_n\left(t_2^-\right)}\frac1{\left(a_n(u)-b_n(t_2^-)\right)^2}\right. \\
 &\,\left.+ \frac{1-\dot b_n\left(t_2^+\right)}{1+\dot b_n\left(t_2^+\right)}\frac1{\left(a_n(u)-b_n(t_2^+)\right)^2}\right]\text du \,.
 \end{split}
\end{align}
Hence, it suffices to show that we may exchange the limit $n\to\infty$ with the
integration. For this it is sufficient to show that the integrand converges uniformly on compact
intervals $[-S,S]$; and likewise one has to repeat the proof for $\dot
b_n$.  However, we know that $a_n,\,\dot a_n,\,b_n,\,\dot b_n$ converge uniformly  thanks
to the definition of the norm, the
denominators are bounded away from zero by Lemma~\ref{lem:4}, and furthermore,
the uniform
convergence of $t_2^\pm(a_n(\cdot),b_n(\cdot),\cdot)=:t_{2,n}^\pm(\cdot)$ to
$t_2^\pm(a(\cdot),b(\cdot),\cdot)=:t_2^\pm(\cdot)$ follows from the estimate
\begin{align*}
 \left|t_{2,n}^\pm(t)-t_2^\pm(t)\right|
 =&\left|t\pm a_n(t)\mp b_n(t_{2,n}^\pm(t))-t\mp a(t)\pm b(t_2^\pm(t))\right|\\
 \le&\left|a_n(t)-a(t)\right| + \left|b_n(t_{2,n}^\pm(t))-b_n(t_2^\pm(t))\right| + \left|b_n(t_2^\pm(t))-b(t_2^\pm(t))\right|\\
 \le& \left|a_n(0)-a(0)\right|+|S|\|\dot a_n-\dot a\|_\infty+V\left|t_{2,n}^\pm(t)-t_2^\pm(t)\right|\\
 &+\left|b_n(0)-b(0)\right|+|t_2^\pm(t)|\|\dot b_n-\dot b\|_\infty \\
 \le& \frac{1+|S|+\sup_{t\in[-S,S]}|t_2^\pm(t)|}{1-V}\left\|(a_n-a,b_n-b)\right\|\,.
\end{align*}
Note that the supremum is finite since $t_2^\pm$ is continuous. Hence, we may
interchange the limit $n\to\infty$ with the integral in \eqref{integral}; and
likewise for the corresponding integral equation for $b_n$. By Theorem \ref{conditional} and the uniform
convergence, we know that $(a,b)$ is smooth. Hence, we may take the derivate of
the integral equations \eqref{integral} recover the FST equations \eqref{WF}.\\

In summary, the last step proves the existence of a smooth global solution to
the FST equations \eqref{WF} that results from the convergence of the
sequence $(a_n,b_n)_{n\in\mathbb N}$, which furthermore obeys the asymptotic
behavior \eqref{order}, which concludes the proof.
\end{proof}

\subsection{Proof of Lemma~\ref{lem:2}}\label{lemma2}
In this last section we provide the remaining proof of Lemma~\ref{lem:2}.
Recall that Lemma \ref{lem:2} is supposed to ensure the claims of Lemma
\ref{lem:1}, i.e., estimate \eqref{Geschw} and \eqref{Orte}, under the stronger
condition \eqref{Streuung}. In the following we prove both claims separately
denoted by Part I and Part II.
We only show the estimates for $a_T$, the ones for $b_T$ are obtained analogously.
In our notation, $C$ will denote finite and positive constants that may vary
from line to line.

Ideally we would like to proof Lemma~\ref{lem:2} assuming only that $T\le T_0$
and that $t^*\in]T,T_0]$ exists such that estimates~\eqref{Streuung} hold true
for all $t\in[T,t^*]$, and consider $t\in[T,t^*]$.  However, several steps in
the proof of Lemma~\ref{lem:2}, including the auxiliary lemmata in this section,
will only hold under finitely many additional conditions of the form that $t$ is
a sufficiently large negative number, i.e., $t\le t_n$ for finitely many $t_n$.
Since we pick up these extra constraints along the way in the proof of
Lemma~\ref{lem:2}, we possibly have to adjust $T_0$ each time and start over the
with the proof -- at most finitely many times. This is unproblematic since all
previous estimates hold also for larger negative values of
$T_0$.

Therefore, in order to keep the presentation reasonably short we employ a slight
abuse of notation to avoid repetition of the proof: Instead of keeping $T_0$
fixed, we adjust its value from $T_0$ to $T_0\wedge t_n$ each time we pick up
another constraint $t\leq t_n$, keeping in mind that in the end, the proof
will only hold for $t_0:=\min_nt_n$ -- exactly in the form
given in Lemma \ref{lem:2}.

\begin{proof}[Proof of Lemma~\ref{lem:2}, Part I:  Estimate~\eqref{Geschw}]
    We observe that $\ddot a_T(t)>0$ for $t>T$ and, by
    definition~\eqref{Asymptoten}, also
    $\ddot a_T(t)=\frac{\eta_1}{t^2}>0$ holds true for $t<T$. Hence,
    $\lim_{t\to-\infty}\dot a_T(t)=u_{-\infty}$ implies $\dot
    a_T(t)>u_{-\infty}$ for all $t$. Using the integrated equation of
    motion~\eqref{WFintTt} (for $a_T$ instead of $a$), the velocity
    estimate~\eqref{V}, Lemma~\ref{lem:t+-} and assumption~\eqref{Streuung}, we
    find
\begin{align*}
 \dot a_T(t)\le&\dot a_T(T)+C\int_T^t\left[\frac1{\left(a_T(s)-b_T(t_2^-)\right)^2}+\frac1{\left(a_T(s)-b_T(t_2^+)\right)^2}\right]\text ds\\
 \le& \dot a_T(T)+C\int_T^t\frac1{\left(a_T(s)-b_T(s)\right)^2}\text ds \\
 \le&\dot a_T(T)+C\int_T^t\frac1{s^2}\text ds\,.
\end{align*}
Since $\dot a_T(T)=\dot x(T)=u_{-\infty}-\frac{\eta_1}T$, estimate~\eqref{Geschw} follows.
\end{proof}

In order to prove estimate~\eqref{Orte} and with it provide Part II of the
proof of Lemma~\ref{lem:2}, we use the fact that equation~\eqref{WFint} also
holds true for $(a_T,b_T)$ instead of $(a,b)$ as long as $t\in[T,
t^*]$; in the following we refer to \eqref{WFint} in the sense of
$(a,b)$ replaced by $(a_T,b_T)$. Our
goal is to employ this formula in order to estimate the distance $|a_T(t)-x(t)|$
by observing cancellations or asymptotically vanishing terms.
Term~\eqref{WFinta}
now reads
\begin{equation}\label{avonT}
\begin{split}
a_T(T)+\dot a_T(T)(t-T)
=&x_{-\infty}+u_{-\infty}T-\eta_1\ln|T|+\left(u_\infty-\frac{\eta_1}T\right)(t-T)\\
=&x_{-\infty}+u_{-\infty}t-\eta_1\ln|T|-\eta_1\frac{t-T}T\,.
\end{split}
\end{equation}
In order to gain some intuition about the terms, we observe that 
the first two summands cancel with the ones in
definition~\eqref{Asymptoten} of $x$. Moreover, as indicated in the
section~\ref{asymptotes}, the factor
of term~\eqref{WFintb} in front of $(t-T)$, multiplied by $T$, converges to
$\eta_1$. Thus,
term~\eqref{WFintb} should cancel the last summand in
\eqref{avonT} asymptotically . Term~\eqref{WFintc} approaches
$$-\eta_1\ln|t|-\frac{\eta_1}2\ln\left(-\frac{(u_{-\infty}-v_{-\infty})^2}{1-v_{-\infty}^2}\right)$$
and, likewise, term~\eqref{WFintd},
$$\eta_1\ln|T|+\frac{\eta_1}2\ln\left(-\frac{(u_{-\infty}-v_{-\infty})^2}{1-v_{-\infty}^2}\right)\,.$$
Finally, \eqref{WFinte} and \eqref{WFintf} are expected to vanish separately.
Correspondingly, it is convenient to group the terms 
\begin{equation}\label{ASumme}
 |a_T(t)-x(t)|\le\sum_{n=1}^5|A_n(t)|\,,
\end{equation}
where
 \begin{align}\label{A1}
\begin{split}
 &A_1(t) := \frac{\kappa_a}2\left(1-\dot a_T(T)^2\right)^{\frac32}\left[\frac{1+\dot b_T(t_2^-(T))}{\dot a_T(T)-\dot b_T(t_2^-(T))}\frac T{a_T(T)-b_T(t_2^-(T))}\right. \\
 &\hphantom{A_1(t):= \frac{\kappa_a}2\left(1-\dot a_T(T)^2\right)^{\frac32}\left[\right.}  + \left.\frac{1-\dot b_T(t_2^+(T))}{\dot a_T(T)-\dot b_T(t_2^+(T))}\frac T{a_T(T)-b_T(t_2^+(T))}\right] \frac{t-T}T
 -\eta_1\frac{t-T}T \,,
 \end{split}
 \end{align}
 \begin{align}\label{A2}
 \begin{split}
 &A_2(t):= \frac{\kappa_a}2\left(1-\dot a_T(t)^2\right)^{\frac32}\left[\frac{1-\dot b_T(t_2^-(t))^2}{\left(\dot a_T(t)-\dot b_T(t_2^-(t))\right)^2}\ln\left(a_T(t)-b_T(t_2^-(t))\right)\right. \\
 &\hphantom{A_2(t):=\frac{\kappa_a}2\left(1-\dot a_T(t)^2\right)^{\frac32}\left[\right.}
 + \left. \frac{1-\dot b_T(t_2^+(t))^2}{\left(\dot a_T(t)-\dot b_T(t_2^+(t))\right)^2}\ln\left(a_T(t)-b_T(t_2^+(t))\right)\right]  \\
 &\hphantom{A_2(t):=}-\eta_1\ln|t|-\frac{\eta_1}2\ln\left(-\frac{(u_{-\infty}-v_{-\infty})^2}{1-v_{-\infty}^2}\right) \,,
 \end{split}
 \end{align}
 \begin{align}\label{A3}
 A_3(t):=A_2(T) \,,
 \end{align}
 \begin{align}\label{A4}
 \begin{split}
 &A_4(t):= \frac{\kappa_a}2\int_T^t \left[\frac{\text d}{\text ds}\left(\left(1-\dot a_T(s)^2\right)^{\frac32}\frac{1-\dot b_T(t_2^-(s))^2}{\left(\dot a_T(s)-\dot b_T(t_2^-(s))\right)^2}\right)\right]\ln\left(a_T(s)-b_T(t_2^-(s))\right) \\
  &\hphantom{\frac{\kappa_a}2\int_T^t} + \left[\frac{\text d}{\text ds}\left(\left(1-\dot a_T(s)^2\right)^{\frac32}\frac{1-\dot b_T(t_2^+(s))^2}{\left(\dot a_T(s)-\dot b_T(t_2^+(s))\right)}\right)\right]\ln\left(a_T(s)-b_T(t_2^+(s))\right)\text ds \,,
  \end{split}
  \end{align}
  \begin{align}\label{A5}
  \begin{split}
 &A_5(t):= \frac{\kappa_a}2\int_T^t \int_T^s \left[\frac{\text d}{\text dr}\left(\left(1-\dot a_T(r)^2\right)^{\frac32}\frac{1+\dot b_T(t_2^-(r))}{\dot a_T(r)-\dot b_T(t_2^-(r))}\right)\right]\frac1{a_T(r)-b_T(t_2^-(r))} \\
  &\hphantom{\frac{\kappa_a}2\int_T^t\int_T^s} + \left[\frac{\text d}{\text dr}\left(\left(1-\dot a_T(r)^2\right)^{\frac32}\frac{1-\dot b_T(t_2^+(r))}{\dot a_T(r)-\dot b_T(t_2^+(r))}\right)\right]\frac1{a_T(r)-b_T(t_2^+(r))} \text dr\text ds\,,
  \end{split}
 \end{align}
and show, term by term, that 
\begin{align}
    \label{eq:Aest}
   |A_n(t)|\le\frac C{\sqrt{|t|}} 
\end{align} holds. For later
use, we remark that a different rearrangement (with $A_2$ on the other side as
in estimate~\eqref{ASumme}) gives
\begin{equation}\label{ASumme2}
 \begin{split}
  &\left|a_T(t)-x_{-\infty}-u_{-\infty}t-\frac{\eta_1}2\ln\left(\frac{(u_{-\infty}-v_{-\infty})^2}{1-v_{-\infty}^2}\right)\right. \\
  &+ \frac{\kappa_a}2\left(1-\dot a_T(t)^2\right)^{\frac32}\left[\frac{1-\dot b_T(t_2^-(t))^2}{\left(\dot a_T(t)-\dot b_T(t_2^-(t))\right)^2}\ln\left(a_T(t)-b_T(t_2^-(t))\right)\right. \\
 &\hphantom{+ \frac{\kappa_a}2\left(1-\dot a_T(t)^2\right)^{\frac32}\left[\right.} \left.+ \left. \frac{1-\dot b_T(t_2^+(t))^2}{\left(\dot a_T(t)-\dot b_T(t_2^+(t))\right)^2}\ln\left(a_T(t)-b_T(t_2^+(t))\right)\right]\right|\le\sum_{n\neq2}\left|A_n(t)\right|\,.
 \end{split}
\end{equation}
The following three lemmata will provide the essential ingredients for the
estimate of \eqref{ASumme}.
\begin{lemma}\label{lem:apunkt}
    For sufficiently large negative numbers $T,t$ such that $T\leq t$, it holds
    that
 \begin{subequations}
  \begin{align}
   &\dot a_T(t_1^+)-\dot a_T(t)<\frac C{|t|}\,,\quad\dot b_T(t)-\dot b_T(t_2^+)<\frac{C}{|t|}\,,\label{apunkt1}\\
   &\dot a_T(t)-\dot b_T(t_2^-)\le\mu\,,\quad \dot a_T(t_1^-)-\dot b_T(t)\le\mu\,,\label{apunkt2}\\
   &\dot a_T(t_1^+)-\dot b_T(t)\le\frac\mu2\,,\quad \dot a_T(t)-\dot b_T(t_2^+)\le\frac\mu2\,.\label{apunkt3}
  \end{align}
 \end{subequations}
\end{lemma}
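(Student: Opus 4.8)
The three groups of estimates in Lemma~\ref{lem:apunkt} are of different nature, and I would prove them in the order \eqref{apunkt2}, \eqref{apunkt1}, \eqref{apunkt3}, with monotonicity of the velocities as the common engine.

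\emph{Monotonicity and \eqref{apunkt2}.} Since $\ddot a_T>0$ on $]T,\infty[$ while $\ddot a_T(s)=\eta_1/s^2>0$ for $s<T$ (and $\dot a_T$ is continuous at $T$ by \eqref{Anfangsbed}), the velocity $\dot a_T$ is increasing on all of $\mathbb R$; analogously $\dot b_T$ is decreasing on all of $\mathbb R$. By Lemma~\ref{lem:t+-} the moduli in \eqref{t+-} may be dropped, so $t_1^-<t<t_1^+$ and $t_2^-<t<t_2^+$. Hence $\dot b_T(t_2^-)\ge\dot b_T(t)$ and $\dot a_T(t_1^-)\le\dot a_T(t)$, and \eqref{apunkt2} is immediate from \eqref{Streuung}:
\[
\dot a_T(t)-\dot b_T(t_2^-)\le\dot a_T(t)-\dot b_T(t)\le\mu,\qquad
\dot a_T(t_1^-)-\dot b_T(t)\le\dot a_T(t)-\dot b_T(t)\le\mu;
\]
no largeness of $|t|$ is needed here.

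\emph{Estimate \eqref{apunkt1} and then \eqref{apunkt3}.} Using $\ddot a_T>0$ and $t_1^+>t$, write $0<\dot a_T(t_1^+)-\dot a_T(t)=\int_t^{t_1^+}\ddot a_T(s)\,\mathrm ds$. From \eqref{Beschleunigung}, \eqref{WF}, the uniform velocity bound \eqref{V}, Lemma~\ref{lem:t+-} and \eqref{Streuung} one gets $\ddot a_T(s)\le C/s^2$ for $s\in[T,t^*]$ (each Coulomb denominator is $\ge\big((a_T(s)-b_T(s))/2\big)^2\ge(\mu s/2)^2$), and the same bound holds for $s<T$, where $\ddot a_T(s)=\eta_1/s^2$. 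Hence, provided $t_1^+\le t^*$ and $|t_1^+|\ge c|t|$ for a fixed $c>0$,
\[
\dot a_T(t_1^+)-\dot a_T(t)\le C\int_t^{t_1^+}\frac{\mathrm ds}{s^2}\le\frac{C}{|t_1^+|}\le\frac{C}{c|t|},
\]
which is the first inequality in \eqref{apunkt1}; the estimate for $\dot b_T(t)-\dot b_T(t_2^+)$ is obtained identically using $\ddot b_T<0$ and $t_2^+>t$. Then \eqref{apunkt3} is purely algebraic: $\dot a_T(t_1^+)-\dot b_T(t)=\big(\dot a_T(t_1^+)-\dot a_T(t)\big)+\big(\dot a_T(t)-\dot b_T(t)\big)\le C/|t|+\mu\le\mu/2$ once $|t|$ is large enough that $C/|t|\le -\mu/2$, and symmetrically for $\dot a_T(t)-\dot b_T(t_2^+)$ via the second inequality in \eqref{apunkt1}.

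\emph{The main obstacle.} What remains — and is the heart of the matter — is to show that the advanced times $t_1^+(t),t_2^+(t)$ are themselves large negative and comparable to $t$, i.e.\ $t_1^+,t_2^+\le t^*$ and $|t_1^+|,|t_2^+|\ge c|t|$; since $t_1^+-t=a_T(t_1^+)-b_T(t)$, this reduces to an a priori bound on the separation $a_T(t)-b_T(t)$ that is (essentially) linear in $|t|$ and \emph{uniform in $T$}. I would derive it by integrating $\dot a_T-\dot b_T$ from $T$: combining \eqref{Streuung} with the refined bounds $\dot a_T(s)-u_{-\infty},\,v_{-\infty}-\dot b_T(s)=O(1/|s|)$ from \eqref{Geschw} (after shrinking $t_0$ so that $s^2\ddot a_T(s)\to\eta_1$ and $-s^2\ddot b_T(s)\to\eta_2$, which upgrades the constants in \eqref{Geschw} to $\eta_1,\eta_2$ up to $\varepsilon$), the leading $|T|$-contributions of $a_T(t)-b_T(t)$ and of $x(T)-y(T)$ cancel and one is left with $a_T(t)-b_T(t)\le x(t)-y(t)+$(lower order, uniform in $T$). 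The inequality $2|\mu|=v_{-\infty}-u_{-\infty}<1+v_{-\infty}$ (valid because $u_{-\infty}>-1$), together with $\dot b_T(s)\le v_{-\infty}$ and $\dot b_T(s)\to v_{-\infty}$ (so that $\dot b_T$ is essentially $v_{-\infty}$ on $[t,t_2^+]$ for $t$ negative enough), then gives via a short bootstrap $t_2^+-t\le\frac{a_T(t)-b_T(t)}{1+v_{-\infty}-\varepsilon}<|t|$, hence $t_2^+<0$ and $|t_2^+|\ge\frac{1+u_{-\infty}}{1+v_{-\infty}}|t|\,(1+o(1))\ge c|t|$; and $t_2^+\le t^*$, resp.\ $t_1^+\le t^*$, for $t$ sufficiently large negative. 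I expect the genuinely delicate point to be exactly this interplay — sharp constant in \eqref{Geschw} $\leftrightarrow$ linear separation bound $\leftrightarrow$ advanced times staying in the asymptotic regime — realized through the successive shrinking of $t_0$; the individual estimates along the way are routine but long, which is presumably why the authors defer them.
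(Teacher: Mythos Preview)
Your treatment of \eqref{apunkt2} and \eqref{apunkt3} is correct and matches the paper. The gap is in \eqref{apunkt1}, precisely at the point you flag as ``the main obstacle'', but your proposed resolution cannot work within the bootstrap structure of Lemma~\ref{lem:2}. Recall that Lemma~\ref{lem:apunkt} is invoked inside the proof of Lemma~\ref{lem:2}, where \eqref{Streuung} is only assumed on $[T,t^*]$ for a \emph{given} $t^*\in\,]T,t_0]$, and the conclusions must hold for every $t\in[T,t^*]$. Your bound $\ddot a_T(s)\le C/s^2$ uses \eqref{Streuung} at $s$, so it is only available for $s\le t^*$; hence you need $t_1^+(t)\le t^*$. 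But $t_1^+(t)>t$ always, so at $t=t^*$ you have $t_1^+(t^*)>t^*$ and the required inclusion fails outright. Shrinking $t_0$ does not help: $t^*$ is handed to you by the hypothesis (in the contradiction argument of Lemma~\ref{lem:1} it is the infimum $\tilde t$), not chosen. The sharp-constant and linear-separation considerations you sketch may well yield $|t_1^+(t)|\ge c|t|$, but they cannot force $t_1^+(t)\le t^*$ for $t$ near $t^*$; and beyond $t^*$ you have neither \eqref{Streuung} nor \eqref{D} (the latter itself relies on Lemma~\ref{lem:1}).

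The paper avoids this trap by never trying to bound $\ddot a_T(s)$ pointwise on $[t,t_1^+(t)]$. Instead it bounds the Coulomb denominators $a_T(s)-b_T(t_2^\pm(s))$ from below by an affine function $f(t)+g(t)(t-s)$, using only the global convexity of $a_T$ and concavity of $b_T$ (tangent-line inequalities at the anchor points $t$ and $t_1^+(t)$). This lower bound is valid for \emph{all} $s\in[t,t_1^+(t)]$, regardless of whether $s\le t^*$. The resulting integral is computed exactly, and after simplification one lands on
\[
\dot a_T(t_1^+(t))-\dot a_T(t)\;\le\;\frac{C}{\bigl(1-\dot a_T(t_1^+(t))\bigr)\bigl(1-\dot b_T(t)\bigr)\bigl(a_T(t_1^+(t))-b_T(t)\bigr)}\;\le\;\frac{C}{a_T(t)-b_T(t)}\;\le\;\frac{C}{|t|},
\]
where only \eqref{V}, Lemma~\ref{lem:t+-}, and \eqref{Streuung} \emph{at the single point} $t$ are used. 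That is the missing idea: replace the pointwise acceleration bound (which needs \eqref{Streuung} along the whole integration interval) by a convexity-based lower bound on the light-cone distances (which needs \eqref{Streuung} only at $t$).
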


\begin{proof}
 In the integrated equation~\eqref{WF} of motion
 \begin{equation}\label{inttt+}
 \begin{split}
 &\dot a_T(t_1^+(t))-\dot a_T(t)
  = \frac{\kappa_a}{2}\int_t^{t_1^+(t)}(1-\dot a_T(s)^2)^{\frac32} \left[\frac{1+\dot b_T\left(t_2^-(s)\right)}{1-\dot b_T\left(t_2^-(s)\right)}\frac1{\left(a_T(s)-b_T(t_2^-(s))\right)^2}\right.\\
  &\hphantom{\dot a_T(t_1^+(t))-\dot a_T(t)  = \frac{\kappa_a}{2}\int_t^{t_1^+(t)}(1-\dot a_T(s)^2)^{\frac32} \left[\right.}
  \left.+ \frac{1-\dot b_T\left(t_2^+(s)\right)}{1+\dot b_T\left(t_2^+(s)\right)}\frac1{\left(a_T(s)-b_T(t_2^+(s))\right)^2}\right]\text ds\,,
  \end{split}
 \end{equation}
we shall estimate the denominators $a_T(s)-b_T(t_2^\pm(s))$ by terms of the form
$f(t)+g(t)(t-s)$ which will be possible even if the
integration can extend beyond the time up to which estimates~\eqref{Streuung}
are valid.  Due to the mean value theorem and the fact that $\ddot b_T<0$, which
implies $\dot b_T(t)<\dot b_T(s)$ for $t>s$, we find
\begin{equation*}
 b_T(t_2^-(s))\le b_T(t)-\dot b_T(t)(t-t_2^-(s))\,.
\end{equation*}
Definition~\eqref{t+-} implies $s-t_2^-(s)=a_T(s)-b_T(t_2^-(s))$, so that we
have
\begin{align*}
 a_T(s)-b_T(t_2^-(s))\ge& a_T(s)-b_T(t)+\dot b_T(t)(t-s)+\dot b_T(t)\left(a_T(s)-b_T(t_2^-(s))\right)\\
 \ge& \frac{a_T(s)-b_T(t)+\dot b_T(t)(t-s)}{1-\dot b_T(t)}\,.
\end{align*}
Employing the analogous estimate $a_T(s)\ge a_T(t_1^+(t))-\dot
a_T(t_1^+(t))\left(t_1^+(t)-s\right)$ we get
\begin{equation*}
 a_T(s)-b_T(t_2^-(s))\ge \frac{f(t)+g(t)(t-s)}{1-\dot b_T(t)}
\end{equation*}
for functions
$$f(t):= a_T(t_1^+(t))-b_T(t)-\dot a_T(t_1^+(t))\left(t_1^+(t)-t\right)
 =\left(1-\dot a_T(t_1^+(t))\right)\left(a_T(t_1^+(t))-b_T(t)\right)$$
 -- where, for the last equality,
 \begin{equation}\label{lemapunkt1}
  t_1^+(t)-t=a_T(t_1^+(t))-b_T(t)
 \end{equation}
 should be recalled --
 and
 $$g(t):= \dot b_T(t)-\dot a_T(t_1^+(t))\,.$$
Likewise, one obtains
$$a_T(s)-b_T(t_2^+(s))\ge \frac{f(t)+g(t)(t-s)}{1+\dot b(t)}\,.$$
Using both of these estimates in the integral equation \eqref{inttt+} together
with the bound on the velocities \eqref{V} in order to estimate $\dot b_T$,
performing the integration, and exploiting the identity~\eqref{lemapunkt1}, we find
\begin{align*}
 \dot a_T(t_1^+(t))-\dot a_T(t) \le& \int_t^{t_1^+(t)}\frac C{\left[f(t)+g(t)(t-s)\right]^2}\text ds\\
 =&\frac C{g(t)\left[f(t)+g(t)(t-t_1^+(t))\right]} - \frac C{g(t)\left[f(t)+g(t)(t-t)\right]}\\
 =&\frac{C\left(f(t)-\left[f(t)+g(t)(t-t_1^+(t))\right]\right)}{g(t)f(t)\left(\left[f(t)+g(t)(t-t_1^+(t))\right]\right)}\\
 =&\frac{C(t_1^+(t)-t)}{f(t)\left[f(t)-g(t)\left(t_1^+(t)-t\right)\right]}\\
 =&\frac{C}{\left(1-\dot a_T(t_1^+(t))\right)\left(1-\dot b_T(t)\right)\left(a_T(t_1^+(t))-b_T(t)\right)}\,.
\end{align*}
Here, we have resubstituted the functions $f$ and $g$ again.
Now Lemma~\ref{lem:t+-} and assumption \eqref{Streuung} allow us to conclude
\begin{equation*}
 \dot a_T(t_1^+(t))-\dot a_T(t) \le\frac{C}{a_T(t)-b_T(t)}\le\frac C{|t|}\,.
\end{equation*}
Since $\ddot b_T<0$, we also get
$$\dot a_T(t)-\dot b_T(t_2^-)\le\dot a_T(t)-\dot b_T(t)\le\mu$$
thanks to assumption \eqref{Streuung}. If $t$ is a sufficiently large negative number, recalling that $\mu<0$, \eqref{apunkt1} implies
$$\dot a_T(t_1^+)-\dot b_T(t)= \dot a_T(t_1^+)-\dot a_T(t)+\dot a_T(t)-\dot b_T(t)\le\frac\mu2\,.$$
We omit a proof of the remaining inequalities which are obtained by very similar
arguments.
\end{proof}

\begin{lemma}\label{lem:eta}
 For sufficiently large negative numbers $T,t$  such that $T\leq
 t$, it holds that
 \begin{equation*}
  \left|\kappa_a\left(1-\dot a_T(t)^2\right)^\frac32\frac{1-\dot b_T(t_2^\pm)^2}{\left(\dot a_T(t)-\dot b_T(t_2^\pm)\right)^2}-\eta_1\right|\le\frac C{|t|}\,.
 \end{equation*}
\end{lemma}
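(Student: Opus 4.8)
\textbf{Proof plan for Lemma~\ref{lem:eta}.}

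The plan is to show that the quantity
$\kappa_a(1-\dot a_T(t)^2)^{3/2}\frac{1-\dot b_T(t_2^\pm)^2}{(\dot a_T(t)-\dot b_T(t_2^\pm))^2}$
converges to $\eta_1$ with an explicit $O(1/|t|)$ rate by tracking the deviation
of each of the four constituent quantities
$\dot a_T(t)$, $\dot b_T(t_2^\pm(t))$, the numerator, and the denominator
from their asymptotic values $u_{-\infty}$, $v_{-\infty}$ and the corresponding
combinations. First I would record that
Lemma~\ref{lem:1}, estimate~\eqref{Geschw}, gives
$0<\dot a_T(t)-u_{-\infty}\le -C/t=C/|t|$ and $0<v_{-\infty}-\dot b_T(t)\le C/|t|$;
together with the smoothness of the map
$(p,q)\mapsto\kappa_a(1-p^2)^{3/2}\frac{1-q^2}{(p-q)^2}$ near the point
$(u_{-\infty},v_{-\infty})$ -- here $u_{-\infty}\ne v_{-\infty}$ keeps the denominator bounded
away from zero -- this would immediately give the bound with $\dot b_T(t)$ in place of
$\dot b_T(t_2^\pm(t))$. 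The only genuinely extra point is to pass from $\dot b_T(t)$ to
$\dot b_T(t_2^\pm(t))$.

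For that step I would use Lemma~\ref{lem:apunkt}, inequality~\eqref{apunkt1},
which gives $0\le \dot b_T(t)-\dot b_T(t_2^+(t))<C/|t|$ (and monotonicity
$\ddot b_T<0$ together with Lemma~\ref{lem:t+-} handles the $t_2^-$ case
analogously: $t_2^-(t)\le t$, so $\dot b_T(t_2^-(t))\ge\dot b_T(t)$, while
$t-t_2^-(t)=a_T(t)-b_T(t_2^-(t))$ is controlled by assumption~\eqref{Streuung},
and a mean-value estimate on $\ddot b_T$ combined with the equation of
motion~\eqref{WF} and the velocity bound~\eqref{V} bounds
$\dot b_T(t_2^-(t))-\dot b_T(t)$ by $C/|t|$ as well). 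Hence
$|\dot b_T(t_2^\pm(t))-v_{-\infty}|\le C/|t|$. Then I would write the difference
as a telescoping estimate: abbreviating $F(p,q):=\kappa_a(1-p^2)^{3/2}\frac{1-q^2}{(p-q)^2}$,
\begin{align*}
 \left|F(\dot a_T(t),\dot b_T(t_2^\pm))-\eta_1\right|
 = \left|F(\dot a_T(t),\dot b_T(t_2^\pm))-F(u_{-\infty},v_{-\infty})\right|
 \le L\left(|\dot a_T(t)-u_{-\infty}|+|\dot b_T(t_2^\pm)-v_{-\infty}|\right),
\end{align*}
where $L$ is a Lipschitz constant for $F$ on a small closed neighbourhood of
$(u_{-\infty},v_{-\infty})$ contained in the region $\{p<q\}$ (so that, after
adjusting $t_0$ to a sufficiently large negative number, all the arguments
$(\dot a_T(t),\dot b_T(t_2^\pm(t)))$ for $t\le t_0$, $T\le t$ lie in it --
note $\eta_1=F(u_{-\infty},v_{-\infty})$ by~\eqref{eta1}). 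Combining the three
displayed bounds yields $|F(\dot a_T(t),\dot b_T(t_2^\pm))-\eta_1|\le C/|t|$,
which is the claim.

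The main obstacle is the passage from $\dot b_T(t)$ to $\dot b_T(t_2^\pm(t))$:
it requires that the advanced/retarded times $t_2^\pm(t)$ differ from $t$ by at
most $O(|t|)$ \emph{in the right direction}, which is exactly what
Lemma~\ref{lem:t+-} and the scattering assumption~\eqref{Streuung} provide
($|t-t_2^\pm(t)|=a_T(t)-b_T(t_2^\pm(t))\le\frac{a_T(t)-b_T(t)}{1-V}$, and
$a_T(t)-b_T(t)$ is itself $O(|t|)$ by~\eqref{Orte}); then one still needs to
convert this time displacement into a velocity displacement, which costs one
factor of the acceleration, i.e.\ another factor $\sim 1/|t|^2$ against a
displacement $\sim|t|$, netting the claimed $1/|t|$ -- this is precisely the
content of~\eqref{apunkt1}. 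A minor subtlety worth a sentence is to make sure
the denominator $(\dot a_T(t)-\dot b_T(t_2^\pm))^2$ never degenerates; this is
guaranteed by~\eqref{apunkt3}, which gives $\dot a_T(t)-\dot b_T(t_2^\pm)\le\mu/2<0$,
so one may safely take $L$ uniform over all admissible $t\le t_0$ and $T\le t$.
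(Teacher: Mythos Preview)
Your argument is correct and follows essentially the same route as the paper: both treat the expression as a smooth function $F(p,q)=\kappa_a(1-p^2)^{3/2}\frac{1-q^2}{(p-q)^2}$, use the mean value theorem/Lipschitz bound together with \eqref{Geschw} for $|\dot a_T(t)-u_{-\infty}|$, \eqref{Geschw} plus \eqref{apunkt1} (or monotonicity for the retarded case) for $|\dot b_T(t_2^\pm)-v_{-\infty}|$, and \eqref{apunkt2}--\eqref{apunkt3} to keep the denominator bounded away from zero; the paper merely writes out the two partial derivatives \eqref{lemeta2}, \eqref{lemeta3} explicitly where you invoke a Lipschitz constant. One small correction of logical order: you should cite \eqref{Geschw} as coming from Part~I of the proof of Lemma~\ref{lem:2} (already established at this point), not from Lemma~\ref{lem:1}, since Lemma~\ref{lem:eta} is an ingredient in the proof of Lemma~\ref{lem:2} and hence of Lemma~\ref{lem:1}; for the same reason the passing appeal to \eqref{Orte} in your obstacle paragraph is premature, though your actual argument correctly relies on \eqref{apunkt1} and \eqref{Streuung} instead.
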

\begin{proof}
By definition~\eqref{eta1} of $\eta_1$, we find
\begin{align*}
 D(t):=&\left|\kappa_a\left(1-\dot a_T(t)^2\right)^\frac32\frac{1-\dot b_T(t_2^\pm)^2}{\left(\dot a_T(t)-\dot b_T(t_2^\pm)\right)^2}-\eta_1\right| \\
 \le& \kappa_a\left|\left(1-\dot a_T(t)^2\right)^\frac32\frac{1-\dot b_T(t_2^\pm)^2}{\left(\dot a_T(t)-\dot b_T(t_2^\pm)\right)^2}
 - \left(1-\dot a_T(t)^2\right)^\frac32\frac{1-v_{-\infty}^2}{\left(\dot a_T(t)-v_{-\infty}\right)^2}\right| \\
 &+\kappa_a\left|\left(1-\dot a_T(t)^2\right)^\frac32\frac{1-v_{-\infty}^2}{\left(\dot a_T(t)-v_{-\infty}\right)^2}
 - \left(1-u_{-\infty}^2\right)^\frac32\frac{1-v_{-\infty}^2}{\left(u_{-\infty}-v_{-\infty}\right)^2}\right|\,.
\end{align*}
The facts
\begin{equation}\label{lemeta2}
 \frac{\partial }{\partial u}\left[(1-u^2)^{\frac32}\frac{1-v^2}{(u-v)^2}\right] = \frac{(1-u^2)^{\frac12}(1-v^2)(-u^2+3uv-2)}{(u-v)^3}
\end{equation}
and
\begin{equation}\label{lemeta3}
 \frac{\partial }{\partial v}\left[(1-u^2)^{\frac32}\frac{1-v^2}{(u-v)^2}\right] = \frac{2(1-u^2)^{\frac32}(1-uv)}{(u-v)^3}\,,
\end{equation}
together with the mean value theorem and the bound $|\dot a_T|,|\dot b_T|\le1$ yield
\begin{equation}\label{lemeta1}
 D(t)
 \le\frac{C}{\left|\dot a_T(t)-v^*\right|^3}\left|v_{-\infty}-\dot b_T(t_2^\pm)\right| + \frac{C}{\left|u^*-v_{-\infty}\right|^3}\left|\dot a_T(t)-u_{-\infty}\right|
\end{equation}
for some $u^*\in[u_{-\infty}, \dot a_T(t)]$ and $v^*\in[\dot b_T(t_2^\pm), v_{-\infty}]$. Since $\ddot a_T>0, \ddot b_T<0$, we get
$$u_{-\infty}-v_{-\infty}\le u^*-v_{-\infty}\le\dot a_T(t)-v^*\le\dot a_T(t)-\dot b_T(t_2^\pm)\le\mu<0$$
thanks to Lemma~\ref{lem:apunkt}. Therefore, $|\dot a_T(t)-v^*|,
|u^*-v_{-\infty}|\ge C$ holds. Employing this bound together with \eqref{Geschw}
in
\eqref{lemeta1} concludes the proof.
\end{proof}

\begin{lemma}\label{lem:letztes}
 For all $t\in\mathbb R$ it holds that
$$\frac{t}{a_T(t)-b_T(t_2^\pm(t))}=\frac{1\pm\dot b_T(\tilde t^\pm)}{\frac{a_T(t)}t-\frac{b_T(t)}{t}}$$
for some $\tilde t^\pm$ between $t$ and $t_2^\pm(t):=t_2^\pm(a_T(t),b_T(t),t)$.
\end{lemma}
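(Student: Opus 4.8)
The plan is to read the identity as a direct consequence of the defining equation \eqref{t+-} for the advanced and retarded times, with a single application of the mean value theorem inserted to generate the factor $1\pm\dot b_T(\tilde t^\pm)$; no asymptotic analysis enters here.

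First I would collect the two structural ingredients. By Lemma~\ref{lem:4} the conditional solution satisfies $\|\dot a_T\|_\infty,\|\dot b_T\|_\infty\le V<1$ and $a_T(t)-b_T(t)\ge D(1+|t|)>0$ for every $t\in\mathbb R$, so Lemma~\ref{lem:t+-} guarantees that $t_2^\pm(t):=t_2^\pm(a_T(t),b_T(t),t)$ is globally well-defined and that the modulus in \eqref{t+-} may be dropped; hence
\begin{equation*}
 t_2^\pm(t)-t=\pm\bigl(a_T(t)-b_T(t_2^\pm(t))\bigr)\,.
\end{equation*}
Since $b_T\in C^1(\mathbb R)$, the mean value theorem is available on the interval with endpoints $t$ and $t_2^\pm(t)$.

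Next I would split
\begin{equation*}
 a_T(t)-b_T(t)=\bigl(a_T(t)-b_T(t_2^\pm(t))\bigr)+\bigl(b_T(t_2^\pm(t))-b_T(t)\bigr)\,,
\end{equation*}
write the second bracket as $\dot b_T(\tilde t^\pm)\bigl(t_2^\pm(t)-t\bigr)$ for a suitable $\tilde t^\pm$ between $t$ and $t_2^\pm(t)$, and substitute the previous display for $t_2^\pm(t)-t$. This yields
\begin{equation*}
 a_T(t)-b_T(t)=\bigl(1\pm\dot b_T(\tilde t^\pm)\bigr)\bigl(a_T(t)-b_T(t_2^\pm(t))\bigr)\,,
\end{equation*}
and, since $|\dot b_T(\tilde t^\pm)|\le V<1$ makes $1\pm\dot b_T(\tilde t^\pm)$ strictly positive, dividing by this factor and by $t$ gives
\begin{equation*}
 \frac{t}{a_T(t)-b_T(t_2^\pm(t))}=\frac{\bigl(1\pm\dot b_T(\tilde t^\pm)\bigr)t}{a_T(t)-b_T(t)}=\frac{1\pm\dot b_T(\tilde t^\pm)}{\frac{a_T(t)}{t}-\frac{b_T(t)}{t}}\,,
\end{equation*}
which is the asserted identity (the case $t=0$ holds in the obvious cleared form, namely the third display above).

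I do not expect any genuine obstacle: the whole argument is \eqref{t+-} together with one mean value theorem. The only points deserving a moment's care are the sign bookkeeping — the symbol $\pm$ occurs in three distinct roles and one must check it propagates consistently through the substitution — and the fact that the denominator $1\pm\dot b_T(\tilde t^\pm)$ never vanishes, which is exactly what the uniform velocity bound \eqref{V} from Lemma~\ref{lem:4} provides.
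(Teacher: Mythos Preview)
Your argument is correct and is essentially the paper's own proof: both rewrite $a_T(t)-b_T(t)$ in terms of $a_T(t)-b_T(t_2^\pm(t))$ via the defining relation $t_2^\pm(t)-t=\pm\bigl(a_T(t)-b_T(t_2^\pm(t))\bigr)$ and a single application of the mean value theorem to $b_T$, then rearrange. One small remark: since this lemma is invoked inside the proof of Lemma~\ref{lem:2}, you should avoid citing the quantitative bound \eqref{D} of Lemma~\ref{lem:4} (which logically depends on Lemma~\ref{lem:1} and hence on Lemma~\ref{lem:2}); for well-definedness of $t_2^\pm$ and positivity of $a_T-b_T$ it suffices to use \eqref{V} together with the fact that conditional solutions satisfy $a_T>b_T$ by construction.
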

\begin{proof}
 Rearranging terms and using definition~\eqref{t+-} of $t_2^\pm$, we get
\begin{align*}
 \frac{t}{a_T(t)-b_T(t_2^\pm(t))}=&\frac{t}{a_T(t)- b_T(t)}\left(1+\frac{b_T(t_2^\pm(t))-b_T(t)}{a_T(t)-b_T(t_2^\pm(t))}\right) \\
 =& \frac{t}{a_T(t)- b_T(t)}\left(1\pm\frac{b_T(t_2^\pm(t))-b_T(t)}{t_2^\pm(t)-t}\right)\,.
\end{align*}
Applying the mean value theorem to the term in brackets, the claim follows.
\end{proof}

Finally, we have all necessary ingredient for Part II of the proof:

\begin{proof}[Proof of Lemma~\ref{lem:2}, estimate~\eqref{Orte}]
    It suffices to provide the bound
    \eqref{eq:Aest}
    on terms $A_1,\, A_2,\, A_3,$ $A_4,\, A_5$ given in
    \eqref{A1}-\eqref{A5}. \\

 \emph{Term $A_5$}: Starting from the expressions
 $$\frac{\partial }{\partial u}\left[(1-u^2)^{\frac32}\frac{1\mp v}{u-v}\right] = \frac{(1-u^2)^{\frac12}(1\mp v)(-2u^2+3uv-1)}{(u-v)^2}$$
 and
 $$\frac{\partial }{\partial v}\left[(1-u^2)^{\frac32}\frac{1\mp v}{u-v}\right] = \frac{(1-u^2)^{\frac32}(1\mp u)}{(u-v)^2}$$
 for $u\neq v\in]-1,1[$ and employing formula~\eqref{tpunkt} for $\dot t_2^\pm$,
     we get
 \begin{align*}
  &\frac{\text d }{\text dr}\left[(1-\dot a_T(r)^2)^{\frac32}\frac{1\mp \dot b_T(t_2^\pm(r))}{\dot a_T(r)-\dot b_T(t_2^\pm(r))}\right] \\
  = & \ddot a_T(r)\frac{(1-a_T(r)^2)^{\frac12}(1\mp \dot b_T(t_2^\pm(r)))(-2\dot a_T(r)^2+3\dot a_T(r)\dot b_T(t_2^\pm(r))-1)}{(\dot a_T(r)-\dot b_T(t_2^\pm(r)))^2} \\
  &+\ddot b_T(t_2^\pm(r))\frac{(1-\dot a_T(r)^2)^{\frac52}}{(1\pm\dot b_T(t_2^\pm(r)))(\dot a_T(r)-\dot b_T(t_2^\pm(r)))^2}
 \end{align*}
for $T\le r\le t^*$.
The velocity estimate~\eqref{V} and Lemma~\ref{lem:apunkt} applied to $\dot
a_T(r)-\dot b_T(t_2^\pm(r))$ imply
\begin{equation}\label{a5betrag}
 \left|\frac{\text d }{\text dr}\left[(1-\dot a_T(r)^2)^{\frac32}\frac{1\mp \dot b_T(t_2^\pm(r))}{\dot a_T(r)-\dot b_T(t_2^\pm(r))}\right]\right|\le C\left(\ddot a_T(r)-\ddot b_T(t_2^\pm(r))\right).
\end{equation}
From the FST equation \eqref{WF}, eq.~\eqref{Beschleunigung} for $\ddot a_T$ and
Lemma~\ref{lem:t+-}, we obtain
\begin{equation}\label{a5a}
 \ddot a_T(r)\le\frac{C}{\left(a_T(r)-b_T(r)\right)^2}
\end{equation}
for $T\le r\le t^*$ as well as
\begin{equation*}
 -\ddot b_T(t_2^\pm(r))\le C\left[\frac1{\left(a_T(r)-b_T(r)\right)^2} +
 \frac1{\left(a_T(t_2^\pm(r))-b_T(t_2^\pm(r))\right)^2}\right]\,,
\end{equation*}
for $T\le r\le t^*$ in the ``$t_2^-(r)$'' case and for $T\le r\le t^*$, but only such that 
$t_2^-(r)> T^+$ holds, in the ``$t_2^+(r)$'' case; recall that for particle $b_T$ the
FST equations \eqref{WF} are only guaranteed to hold from $T^+$ on by Theorem
\ref{conditional}.
By the mean value theorem, the velocity estimate, definition~\eqref{t+-} of
$t_2^\pm$ and Lemma~\ref{lem:t+-}, we furthermore find
\begin{equation*}
\begin{split}
 a_T(t_2^+(r))-b_T(t_2^+(r))\ge& a_T(r)-V(t_2^+(r)-r)-b(t_2^+(r)) \\
 =& a_T(r)-V(a_T(r)-b_T(t_2^ +(r)))-b_T(t_2^+(r)) \ge\frac C{a_T(r)-b_T(r)}
 \end{split}
\end{equation*}
and
\begin{equation*}
a_T(t_2^-(r))-b_T(t_2^-(r))\ge\frac C{a_T(r)-b_T(r)}
 \end{equation*}
holds true for the values of $r$ considered above. This is because assumption~\eqref{Streuung}
implies that $a_T(r)-b_T(r)$ is monotonically decreasing in $r$.
If, instead, $T\le r\le t^*$ but now $t_2^-(r)\le T^+$, then, using in addition
definition~\eqref{Asymptoten} of the asymptotes, we get
\begin{equation*}
 -\ddot b_T(t_2^-(r))=\frac{\eta_2}{t_2^-(r)^2}<\frac{\eta_2}{\left(r-t_2^-(r)\right)^2}=\frac{\eta_2}{\left(a_T(r)-b_T(t_2^-(r))\right)^2}\le\frac C{\left(a_T(r)-b_T(r)\right)^2}\,.
\end{equation*}
In summary, in all cases, we find
\begin{equation}\label{a5b}
 -\ddot b_T(t_2^\pm(r))\le\frac C{\left(a_T(r)-b_T(r)\right)^2}\,.
\end{equation}
Using this for $b_T$ together with estimate~\eqref{a5a} for $\ddot a_T$ in
\eqref{a5betrag}, we can provide the claimed bound \eqref{eq:Aest} on $A_5$ in \eqref{A5}  by
using once more Lemma~\ref{lem:t+-} and assumption~\eqref{Streuung}. Indeed, we
infer the even better bound
\begin{equation*}
 \begin{split}
  |A_5(t)|\le \int_T^t\int_T^s\frac{C}{\left(a_T(r)-b_T(r)\right)^3}\text dr\text ds
  \le \int_T^t\int_T^s\frac{C}{|t|^3}\text dr\text ds\le\frac C{|t|}\,.
 \end{split}
\end{equation*}

\emph{Term $A_4$:}
We compute the derivatives in the integrand of $A_4$ in \eqref{A4} via
formulas~\eqref{lemeta2} and \eqref{lemeta3}. Using the assumption~\eqref{Streuung}
to estimate the denominator, the boundedness $|\dot a_T|, |\dot b_T|\le1$ 
to estimate the remaining velocities as well as
and estimates~\eqref{a5a} and \eqref{a5b} for $\ddot a_T$ and $\ddot b_T$, gives
\begin{align*}
 &\left|\frac{\text d}{\text ds}\left(\left(1-\dot a_T(s)^2\right)^{\frac32}\frac{1-\dot b_T(t_2^\pm(s))^2}{\left(\dot a_T(s)-\dot b_T(t_2^\pm(s))\right)^2}\right)\right| \\
 =&\left|\ddot a_T(s)\frac{\left(1-\dot a_T(s)^2\right)^{\frac12}\left(1-\dot b_T(t_2^\pm)^2\right)\left(-\dot a_T(s)^2+3\dot a_T(s)\dot b_T(t_2^\pm(s))-2\right)}{\left(\dot a_T(s)-\dot b_T(t_2^\pm(s))\right)^3}\right.  \\
 &\left.+ \ddot b_T(t_2^\pm(s))\frac{2(1-\dot a_T(s))\left(1-\dot a_T(s)^2\right)^{\frac32}\left(1-\dot a_T(s)\dot b_T(t_2^\pm(s))\right)}{\left(1-\dot b_T(t_2^\pm(s))\left(\dot a_T(s)-\dot b_T(t_2^\pm(s))\right)^3\right)}\right| \\
 \le& C\left(\ddot a_T(s)-\ddot b_T(s)\right) \le\frac{C}{\left(a_T(s)-b_T(s)\right)^2}
\end{align*}
for $T\leq s\leq t^*$.
For sufficiently large negative $s$, assumption~\eqref{Streuung} implies
\begin{equation}\label{a4a}
\frac{a_T(s)-b_T(s)}{2}\ge\frac{\mu s}2\ge1\,.
\end{equation}
Invoking Lemmata~\ref{lem:t+-} and \ref{lem:4}, yields
\begin{equation}\label{ln}
 0=\ln(1)\le\ln\left(a_T(s)-b_T(t_2^\pm(s))\right)\le\ln\left(\frac{a_T(s)-b_T(s)}{1-V}\right)
\end{equation}
and therefore,
\begin{equation*}
 |A_4(t)|\le \int_T^t\frac{C}{(a_T(s)-b_T(s))^2}\ln\left(\frac{a_T(s)-b_T(s)}{1-V}\right)\text ds\,.
\end{equation*}
The integrand is increasing in $a_T(s)-b_T(s)$ for sufficiently large negative $s$, so, using \eqref{a4a}, $A_4$ can further be bounded by
\begin{align*}
 |A_4(t)|\le\int_T^t\frac{C\ln(C|s|)}{s^2}\text ds\le C\int_T^t\frac{\ln(C|s|)-1}{s^2}\text ds = C\int_T^t\frac{\text d}{\text ds}\frac{\ln(C|s|)}{-s}\text ds\le C\frac{\ln|t|}{|t|}\,.
\end{align*}

So far the terms had sufficient decay in time to asymptotically vanish. As
discussed above, in the
following estimates it will be important to
observe certain cancellations 
between the term in order to provide the corresponding estimates
\eqref{eq:Aest}.\\

\emph{Term $A_1$}:
Since $T\le t<0$ we have $|\frac{t-T}{T}|\le1$, and furthermore
\begin{align}
 &|A_1(t)| \\
    \label{sum1}
 \le&\left|\frac{\kappa_a}2\left(1-\dot a_T(T)^2\right)^{\frac32}\left[\frac{1-\dot b_T(t_2^-(T))^2}{\left(\dot a_T(T)-\dot b_T(t_2^-(T))\right)^2} 
 +\frac{1-\dot b_T(t_2^+(T))^2}{\left(\dot a_T(T)-\dot b_T(t_2^+(T))\right)^2}\right]-\eta_1\right| \\
 &+\left|\frac{\kappa_a}2\left(1-\dot a_T(T)^2\right)^{\frac32}\frac{1+\dot b_T(t_2^-(T))}{\dot a_T(T)-\dot b_T(t_2^-(T))}\left(\frac{T}{a_T(T)-b_T(t_2^-(T))}-\frac{1-\dot b_T(t_2^-(T))}{\dot a_T(T)-\dot b_T(t_2^-(T))}\right)\right| \\
    \label{sum2}
 &+\left|\frac{\kappa_a}2\left(1-\dot a_T(T)^2\right)^{\frac32}\frac{1-\dot b_T(t_2^+(T))}{\dot a_T(T)-\dot b_T(t_2^+(T))}\left(\frac{T}{a_T(T)-b_T(t_2^+(T))}-\frac{1+\dot b_T(t_2^+(T))}{\dot a_T(T)-\dot b_T(t_2^+(T))}\right)\right| \\
    \label{sum3}
 \le&\frac C{|T|} + C\left|\frac{T}{a_T(T)-b_T(t_2^-(T))}-\frac{1-\dot b_T(t_2^-(T))}{\dot a_T(T)-\dot b_T(t_2^-(T))}\right| \\
 &+ C\left|\frac{T}{a_T(T)-b_T(t_2^+(T))}-\frac{1+\dot b_T(t_2^+(T))}{\dot a_T(T)-\dot b_T(t_2^+(T))}\right|\,,
\end{align}
where Lemma~\ref{lem:eta} has been used for the summand \eqref{sum1} 
and the usual velocity estimates for the other two summands
\eqref{sum2},\eqref{sum3}.
With $\tilde T^\pm$ between $T$ and $t_2^\pm(T)$ in order to apply
Lemma~\ref{lem:letztes}, and in addition
using Lemma~\ref{lem:apunkt} and $\ddot b_T<0$, we find
\begin{align*}
 &\left|\frac{T}{a_T(T)-b_T(t_2^\pm(T))}-\frac{1\pm\dot b_T(t_2^\pm(T))}{\dot a_T(T)-\dot b_T(t_2^\pm(T))}\right| \\
 \le& \left|\frac{1\pm\dot b_T(\tilde T^\pm)}{\frac{a_T(T)}T-\frac{b_T(T)}T}-\frac{1\pm\dot b_T(\tilde T^\pm)}{\dot a_T(T)-\dot b_T(t_2^\pm(T))}\right|
 + \left|\frac{\dot b_T(\tilde T^\pm)-\dot b_T(t_2^\pm(T))}{\dot a_T(T)-\dot b_T(t_2^\pm(T))}\right| \\
 \le& 2\left|\frac{1}{\frac{a_T(T)}T-\frac{b_T(T)}T}-\frac{1}{\dot a_T(T)-\dot b_T(t_2^\pm(T))}\right| + \frac2\mu\left|\dot b_T(T)-\dot b_T(t_2^\pm(T))\right|\,.
\end{align*}
By assumption~\eqref{Streuung} and Lemma~\ref{lem:apunkt}, the denominators in
the first modulus are at most $\frac\mu2$. Applying the mean value theorem,
Lemma~\ref{lem:apunkt}, the fact that $a_T(t)=x(t)$ and $b_T(t)=y(t)$ for $t\le
T$ and the definition of the asymptotes in \eqref{Asymptoten}, yields
\begin{align*}
 &\left|\frac{T}{a_T(T)-b_T(t_2^\pm(T))}-\frac{1\pm\dot b_T(t_2^\pm(T))}{\dot a_T(T)-\dot b_T(t_2^\pm(T))}\right| \\
 \le& \frac8{\mu^2} \left|\frac{a_T(T)}T-\frac{b_T(T)}T - \left(\dot a_T(T)-\dot b_T(t_2^\pm(T))\right)\right| + \frac2\mu\left|\dot b_T(T)-\dot b_T(t_2^\pm(T))\right| \\
 \le& C\left|\frac{x(T)}T-\dot x(T)\right| + C\left|\frac{y(T)}T - \dot y(T)\right| + C\left|\dot b_T(T)-\dot b_T(t_2^\pm(T))\right|\\
 \le& C\frac{\ln|T|}{|T|}\le C\frac{\ln|t|}{|t|},
\end{align*}
and therefore, $$|A_1(t)|\le C\frac{\ln|t|}{|t|}\,,$$
which complies with the claimed estimate \eqref{eq:Aest}.

\emph{Term $A_3,\,A_2$:} We compute for term $A_2$ in \eqref{A2} for $T\leq t$
\begin{align}\label{a21}
\begin{split}
 |A_2(t)|\le&
 \left|\left[\frac{\kappa_a}2\left(1-\dot a_T(t)^2\right)^{\frac32}\frac{1-\dot b_T(t_2^-(t))^2}{\left(\dot a_T(t)-\dot b_T(t_2^-(t))\right)^2}
 -\frac{\eta_1}2\right]\ln\left(a_T(t)-b_T(t_2^-(t))\right)\right| \\
 &+\left|\left[\frac{\kappa_a}2\left(1-\dot a_T(t)^2\right)^{\frac32}\frac{1-\dot b_T(t_2^+(t))^2}{\left(\dot a_T(t)-\dot b_T(t_2^+(t))\right)^2}
 -\frac{\eta_1}2\right]\ln\left(a_T(t)-b_T(t_2^+(t))\right)\right| \\
 &+\frac{\eta_1}2\left|\ln\left(-\frac{a_T(t)-b_T(t_2^-(t))}{t}\right)-\ln\left(-\frac{u_{-\infty}-v_{-\infty}}{1-v_{-\infty}}\right)\right| \\
 &+\frac{\eta_1}2\left|\ln\left(-\frac{a_T(t)-b_T(t_2^+(t))}{t}\right)-\ln\left(-\frac{u_{-\infty}-v_{-\infty}}{1+v_{-\infty}}\right)\right| \\
 \le& \frac C{|T|}\left|\ln\left(\frac{a_T(t)-b_T(t)}{1-V}\right)\right| \\
 &+ C\left(\left|\frac{a_T(t)-b_T(t_2^-(t))}{t} - \frac{u_{-\infty}-v_{-\infty}}{1-v_{-\infty}}\right| 
 + \left|\frac{a_T(t)-b_T(t_2^+(t))}{t} - \frac{u_{-\infty}-v_{-\infty}}{1+v_{-\infty}}\right|\right),
 \end{split}
\end{align}
where in the first two summands Lemma~\ref{lem:eta} has been applied to the
factors in the brackets involving the 
difference for $\frac{\eta_1}2$, and the logarithmic factors
were bounded by estimate~\eqref{ln}. For the differences of the logarithms
we exploited once again the mean value theorem noting that their arguments
are all bounded from below by $\frac\mu2$ thanks to assumption~\eqref{Streuung}
and $|v_{-\infty}|<1$. With $\tilde t^\pm$ between $t$ and $t_2^\pm(t)$ in order
to apply 
Lemma~\ref{lem:letztes} and noting that $\ddot b_T<0$, we find
\begin{align}\label{a22}
\begin{split}
 &\left|\frac{a_T(t)-b_T(t_2^\pm(t))}{t} - \frac{u_{-\infty}-v_{-\infty}}{1\pm v_{-\infty}}\right| \\
 \le&  \left|\frac{\frac{a_T(t)}t-\frac{b_T(t)}t-u_{-\infty}+v_{-\infty}}{1\pm\dot b_T(\tilde t^\pm)}\right|
 +\left|\frac{u_{-\infty}-v_{-\infty}}{1\pm\dot b_T(\tilde t^\pm)} - \frac{u_{-\infty}-v_{-\infty}}{1\pm v_{-\infty}}\right| \\
 \le& C\left(\left|\frac{a_T(t)}t-u_{-\infty}\right| + \left|\frac{b_T(t)}t-v_{-\infty}\right| + \left|\dot b_T(\tilde t^\pm)-v_{-\infty} \right|\right)\,.
 \end{split}
\end{align}
Lemmata~\ref{lem:apunkt} and \ref{lem:1} and the fact that
$\ddot b_T<0$ implies
\begin{align*}
\left|\dot b_T(\tilde t^\pm)-v_{-\infty} \right|\le&\left|\dot b_T(t_2^+(t))-v_{-\infty} \right|\le\left|\dot b_T(t_2^+(t))-\dot b_T(t)\right|+\left|\dot b_T(t)-v_{-\infty} \right|
\le\frac C{|t|}\,.
\end{align*}
Collecting these results going back to eq.~\eqref{a22} and then \eqref{a21}, we
have shown
\begin{align}\label{a23}
 |A_2(t)|\le \frac C{|t|}\left(\left|\ln|a_T(t)-b_T(t)|\right|+\left|a_T(t)-u_{-\infty}t\right| + \left|b_T(t)-v_{-\infty}t\right| + 1 \right).
\end{align}
Note that the term $A_3(t)$ in \eqref{A3} is constant in $t$ and fulfills
$A_3(t)=A_2(T)$. If $t=T$, then
$a_T(t)=x(t)$ and $b_T(t)=y(t)$ and, for sufficiently large negative $t$,
the definition of the asymptotes $(x,y)$ in \eqref{Asymptoten} yields
\begin{equation*}
 |A_3(t)|=|A_2(T)|\le C\frac{\ln|T|}{|T|}\le C\frac{\ln|t|}{|t|}\,.
\end{equation*}
In order to provide a similar bound for $A_2(t)$ for general $t$, we first need
an upper bound on $a_T(t)-b_T(t)$ to estimate the right-hand side of
\eqref{a23}. This can be obtained by collecting the preceding
estimates together in the form~\eqref{ASumme2}: First, we may omit the terms in
\eqref{ASumme2} involving $\ln\left(a_T(t)-b_T(t_2^\pm(t))\right)$, which are
negative by \eqref{ln}, and find
\begin{align*}
 a_T(t)\le
 x_{-\infty}+u_{-\infty}t+\frac{\eta_1}2\ln\left(\frac{(u_{-\infty}-v_{-\infty})^2}{1-v_{-\infty}^2}\right)+\frac{C}{\sqrt{|t|}}\le
 C_1+u_{-\infty}t+\frac{C}{\sqrt{|t|}}\,,
\end{align*}
and likewise
\begin{align*}
 b_T(t)\ge C_2+v_{-\infty}t-\frac{C}{\sqrt{|t|}},
\end{align*}
where $C_1, C_2$ are not necessarily positive constants.
Recalling the choice $u_{-\infty}<v_{-\infty}$, we have
\begin{equation*}
 0<a_T(t)-b_T(t)\le C + (u_{-\infty}-v_{-\infty})t+\frac{C}{\sqrt{|t|}}\le C\left(1+|t|\right)
\end{equation*}
and, for sufficiently large negative $t$,
\begin{align}\label{a24}
 \left|\ln\left(a_T(t)-b_T(t)\right)\right|\le C\ln|t|\,.
\end{align}
Applying now \eqref{ASumme2} in the opposite direction and using, beside the
estimate just derived, Lemma~\ref{lem:4} and inequality \eqref{Streuung} for the
velocities,  we obtain
\begin{align*}
 a_T(t)\ge C_3+u_{-\infty}t-C\ln|t|-\frac C{\sqrt{|t|}}\,.
\end{align*}
Together with the upper bound, this implies
\begin{equation*}
 \left|a_T(t)-u_{-\infty}t\right|\le C\ln|t|
\end{equation*}
for sufficiently large negative $t$. Going back to 
\eqref{a23}, the latter bound and the corresponding one for $b_T$
together with estimate~\eqref{a24}, we find
$$|A_2(t)|\le C\frac{\ln|t|}{|t|}.$$
which again complies with the necessary estimate \eqref{eq:Aest}.\\

In summary, we have shown that \eqref{eq:Aest} holds for $T\leq t\leq t^*$
which concludes the proof of Lemma~\ref{lem:2}.
\end{proof}

\vskip1cm

\textbf{Acknowledgement.} This work was partially funded by the Elite Network of Bavaria
through the Junior Research Group `Interaction between Light and Matter'.

\bibliographystyle{plain}

\vskip1cm

\end{document}